\documentclass[12pt]{article}
\usepackage[a4paper,margin=1.1in]{geometry}
\usepackage{amssymb}
\usepackage{amsmath}
\usepackage{amsfonts}
\usepackage{amsthm}
\usepackage{color}
\usepackage{float}
\usepackage[all]{xy}
\usepackage{scalerel}
\usepackage{mathabx}
\usepackage{delimset}

\newcommand{\simplinomial}{\genfrac{\langle}{\rangle}{0pt}{}}
\newcommand{\stirling}{\genfrac{[}{]}{0pt}{}}
\newcommand{\stirlingII}{\genfrac{\{}{\}}{0pt}{}}

\newcommand{\C}{\mathbb{C}}

\newcommand{\N}{\mathbb{N}}

\newcommand{\R}{\mathbb{R}}

\newcommand{\D}{\mathrm{D}}
\newcommand{\s}{\mathrm{S}}

\newcommand{\e}{\mathrm{e}}

\newtheorem{theorem}{Theorem}

\newtheorem{definition}{Definition}
\newtheorem{proposition}{Proposition}
\newtheorem{corollary}{Corollary}
\newtheorem{example}{Example}

\begin{document}

\title{\textbf{Hypergeometric Bernoulli Polynomials Defined on Simplicial $d$-Polytopic Numbers}}
\author{Ronald Orozco L\'opez}

\newcommand{\Addresses}{{
  \bigskip
  \footnotesize

  \textit{E-mail address}, R.~Orozco: \texttt{rj.orozco@uniandes.edu.co}
  
}}

\maketitle
\tableofcontents

\begin{abstract}
We introduce an $\s_d$-analogue of the hypergeometric Bernoulli polynomials and study their properties. To achieve this goal, we introduce a calculus defined on the simplicial $d$-polytopic numbers. Two definitions of the $\s_d$-derivatives are given. These two definitions allow us to derive an identity relating Kummer confluent hypergeometric function and Touchard polynomials. This calculus is closely related to the $d$-Hoggatt binomial coefficients. $\s_d$-analogs of the exponential function and the hypergeometric functions are given.
\end{abstract}
{\bf Keywords:} Simplicial $d$-polytopic numbers; $\s_d$-exponential function; $d$-Hoggatt binomial coefficients; bivariate $d$-Hoggatt polynomials; $\s_d$-hypergeometric series\\
{\bf Mathematics Subject Classification:} 11B68, 33C15.

\section{Introduction}

The Bernoulli polynomials $B_n(x)$, named after Jacob Bernoulli and defined by the generating function
\begin{equation}
\frac{te^x}{e^t-1}=\sum_{n=0}^{\infty}B_{n}\frac{x^n}{n!},\hspace{0.5cm}\vert t\vert<2\pi,
\end{equation}
where $B_n=B_{n}(0)$ are the Bernoulli numbers, are quite likely the most widely studied polynomials. They play a fundamental role in combinatorics, number theory, numerical analysis, approximation theory, differential equations, and other fields.
Some of the most important properties of Bernoulli polynomials $B_n(x)$ are:
\begin{align*}
    &B_{n}(0)=B_{n}(1)=B_n,\hspace{0.5cm}n\neq1,\\
    B_{n}(x+1)-B_{n}(x)&=nx^{n-1},\hspace{1.5cm}B^{\prime}_n(x)=bB_{n-1}(x),\\
    B_{n}(x)&=\sum_{k=0}^{n}\binom{n}{k}B_kx^{n-k}.
\end{align*}
Many generalizations of these polynomials have been studied. Among them, the best known are:
The generalized Bernoulli polynomials defined by Gatteschi \cite{gattes} for all $n\in\N$ and $\alpha\in\C$,
\begin{equation}
    e^{tx}\left(\frac{t}{e^t-1}\right)^\alpha=\sum_{n=0}^{\infty}B_{n}^{(\alpha)}(x)\frac{t^n}{n!}.
\end{equation}
The Apostol-Bernoulli polynomials studied by Luo \cite{luo} 
\begin{equation}
    \frac{te^{tx}}{\lambda e^t-1}=\sum_{n=0}^{\infty}B_{n}(x;\lambda)\frac{t^n}{n!},\hspace{0.5cm}\vert t+\ln\lambda\vert<2\pi.
\end{equation}
Other Apostol-Bernoulli polynomials with two parameters were studied by Srivastava et al. \cite{srivastava}.

The hypergeometric Bernoulli polynomials defined by Howard \cite{howard1,howard2}
\begin{equation}\label{hbp}
    \frac{t^me^{tx}/m!}{e^t-\mathrm{T}_{m-1}(t)}=\sum_{n=0}^{\infty}B_{n}(m,x)\frac{t^n}{n!},
\end{equation}
where $\mathrm{T}_{m-1}(x)$ is the Taylor polynomial of order $m-1$ for the exponential function.

Other generalizations are provided by the $q$-analogs and Fibo-analogs of the Bernoulli polynomials. Carlitz \cite{carlitz1,carlitz2} defined and studied the $q$-Bernoulli numbers and polynomials. A great deal of research has been devoted to these $q$-analogs \cite{al1}, \cite{al2}, \cite{luo2}, \cite{man}, \cite{duran1}, \cite{duran2}. A $q$-analogue of the hypergeometric Bernoulli polynomials Eq.(\ref{hbp}) was defined by Sadjang et al
\begin{equation}
    \frac{t^m/\brk[s]{m}_q!}{e_q(t)-\mathrm{T}_{m-1,q}(t)}e_q(tx)=\sum_{n=0}^{\infty}B_{n,q}(m,x)\frac{t^n}{\brk[s]{n}_q!},
\end{equation}
where $\brk[s]{n}_q=\frac{1-q^n}{1-q}$ is the $q$-number, $\brk[s]{n}_d!=\brk[s]{1}_d\brk[s]{2}_d\cdots\brk[s]{n}_d$, $e_q(t)$ is the $q$-exponential function
\begin{equation}
    e_q(t)=\sum_{n=0}^{\infty}\frac{t^n}{\brk[s]{n}_q!}
\end{equation}
and
\begin{equation}
    \mathrm{T}_{m,q}(t)=\sum_{n=0}^{m}\frac{t^n}{\brk[s]{n}_q!}.
\end{equation}

The Bernoulli-Fibonacci polynomials $B_{n}^F(x)$ defined by Pashaev \cite{pashaev}
\begin{equation}
    \frac{te_{F}^{tx}}{e_{F}^t-1}=\sum_{n=0}^{\infty}B_{n}^{F}(x;\lambda)\frac{t^n}{F_n!},
\end{equation}
where $F_n$ are the Fibonacci numbers, $F_n!=F_1F_2\cdots F_n$ and $e_F^x$ is the Golden exponential function
\begin{equation}
    e_F^x=\sum_{n=0}^{\infty}\frac{t^n}{F_n!}.
\end{equation}
Also, the Apostol Bernoulli-Fibonacci polynomials of order $\alpha$ \cite{tuglu} defined by
\begin{equation}
    \left(\frac{t}{\lambda e_{F}^t-1}\right)^\alpha e_{F}^{tx}=\sum_{n=0}^{\infty}B_{n,F}^{\alpha}(x;\lambda)\frac{t^n}{F_n!}.
\end{equation}
Other generalizations based on the Fibonacci numbers can be found in \cite{can}, \cite{kus}.

In this article, we introduce the $\s_d$-hypergeometric Bernoulli polynomials $B_{d,n}(m;x)$ by the following generating function
\begin{equation}
    \frac{t^m/\brk[a]{m}_d!}{\exp_{d}(t)-\mathrm{T}_{d,m-1}(t)}\exp_{d}(xt)=\frac{\exp_{d}(xt)}{{}_{1}\sigma^d_{1}(1;m+1;t)}=\sum_{n=0}^{\infty}B_{d,n}(m;x)\frac{t^n}{\brk[a]{n}_{d}!},
\end{equation}
where $\brk[a]{n}_d$ is the sequence of simplicial $d$-polytope numbers
\begin{equation}
    \brk[a]{n}_d=\binom{n+d-1}{d},
\end{equation}
with $\brk[a]{n}_d!=\brk[a]{1}_d\brk[a]{2}_d\cdots\brk[a]{n}_d$, $\exp_d(x)$ is the $\s_d$-exponential function
\begin{align}
    \exp_d(x)&=\prod_{i=0}^{d-1}i!\sum_{n=0}^{\infty}\frac{(d!x)^n}{n!(n+1)!(n+2)!\cdots(n+d-1)!},\\
    \mathrm{T}_{d,m}(t)&=\prod_{i=0}^{d-1}i!\sum_{n=0}^{m}\frac{(d!x)^n}{n!(n+1)!(n+2)!\cdots(n+d-1)!},
\end{align}
and ${}_{1}\sigma_{1}^d(a;b;t)$ is the $\s_d$-analog of the Kummer confluent hypergeometric function.
The polynomials $B_{d,n}(m;x)$ fulfill the translation formula
\begin{equation}
    B_{d,n}(m;x\oplus_dy)=\sum_{k=0}^{n}\simplinomial{n}{k}_{d}B_{d,n}(m;x)y^{n-k}    
\end{equation}
where
\begin{equation}
    (x\oplus_dy)^{(n)}=\sum_{k=0}^{n}\simplinomial{n}{k}_dx^{n-k}y^k
\end{equation}
are the bivariate $d$-Hoggatt polynomials, which are a generalization of $d$-Hoggatt polynomials \cite{cigler2}, and $\simplinomial{n}{k}_d$
is the $d$-Hoggatt binomial coefficients \cite{cigler}
\begin{equation}
    \simplinomial{n}{k}_d=\frac{\brk[a]{n}_d!}{\brk[a]{k}_d!\brk[a]{n-k}_d!}.
\end{equation}
The framework we have used to discuss $\s_d$-hypergeometric Bernoulli polynomials is a calculus defined on $d$-simplicial polytopic numbers, where the $\s_d$-derivative operator is defined by
\begin{equation}\label{der1}
    \D_{S_{d}}f(x)=\sum_{k=0}^{d-1}\binom{d-1}{k}\frac{1}{(k+1)!}X^{k}\D^{k+1}f(x)
\end{equation}
or
\begin{equation}\label{der2}
        \D_{\s^\prime_d}=\frac{1}{d!}\sum_{k=0}^{d}\stirling{d}{k}X^{-1}(XD)^k,
    \end{equation}
where $\stirling{n}{k}$ are the numbers of Stirling of the first kind, $D=\frac{d}{dx}$ is the derivative operator, and $f(x)\in C^{k+1}$. For $d$ ranging from $1$ to $5$, we have the following $\s_d$-derivatives, respectively: the ordinary derivative $\frac{d}{dx}$, the triangular derivative $\D_\mathrm{T}$, the tetrahedral derivative $\D_\mathrm{Te}$, the pentachoron derivative $\D_\mathrm{P}$, and the hexateron derivative $\D_\mathrm{H}$. By using the operator Eqs. (\ref{der1}) and (\ref{der2}) we obtain a beautiful relation between the Kummer confluent hypergeometric function $_{1}F_{1}(a;b;x)$ and the Touchard polynomials $\mathrm{T}_n(x)$ of degree $n$
\begin{equation}
    {}_{1}F_{1}(1-d;2;-x)=\frac{x^{-1}}{d!}\sum_{k=0}^{d}\stirling{d}{k}\mathrm{T}_k(x).
\end{equation}
The function $\exp_d(x)$ is solution of the differential equation
\begin{equation}
    \sum_{k=0}^{d-1}\binom{d-1}{k}\frac{1}{(k+1)!}x^{k}y^{(k+1)}=y,
\end{equation}
in this regard, the function $\exp_d(x)$ is an eigenfunction of the operator $\D_{\s_d}$.

\section{Simplicial polytopic numbers}

The simplicial polytopic numbers are a family of sequences of figurate numbers corresponding to the $d$-dimensional simplex for each dimension $d$, where $d$ is a non-negative integer. For $d$ ranging from $1$ to $5$, we have the following simplicial polytopic numbers, respectively: non-negative numbers $\N$, triangular numbers $\mathrm{T}$, tetrahedral numbers $\mathrm{Te}$, pentachoron numbers $\mathrm{P}$, and hexateron numbers $\mathrm{H}$. A list of the above sets of numbers is as follows:
\begin{align*}
    \N&=\{0,1,2,3,4,5,6,7,8,9,...\},\\
    \mathrm{T}&=\{0,1,3,6,10,15,21,28,36,45,55,66,...\},\\
    \mathrm{Te}&=\{0,1,4,10,20,35,56,84,120,165,...\},\\
    \mathrm{P}&=\{0,1,5,15,35,70,126,210,330,495,715,...\},\\
    \mathrm{H}&=\{0,1,6,21,56,126,252,462,792,1287,...\}.
\end{align*}
The $n^{th}$ simplicial $d$-polytopic numbers are given by the formulae
\begin{equation}
    \brk[a]{n}_{d}=\binom{n+d-1}{d}=\frac{n^{(d)}}{d!},
\end{equation}
where $x^{(d)}=x(x+1)(x+2)\cdots(x+d-1)$ is the rising factorial.
En forma recursiva the $n^{th}$ simplicial $d$-polytopic numbers vienen dados por
\begin{equation}\label{eqn_recursive}
    \brk[a]{n+1}_{d}=\brk[a]{n}_{d}+\brk[a]{n+1}_{d-1},\ d\geq1.
\end{equation}
From Vandermonde's identity
\begin{equation}
    \binom{n+m}{r}=\sum_{k=0}^{r}\binom{m}{k}\binom{n}{r-k}
\end{equation}
we obtain the following representation for the $\s_d$-numbers
\begin{equation}\label{repre1}
    \brk[a]{n}_{d}=\sum_{k=0}^{d-1}\binom{d-1}{k}\binom{n}{k+1}.
\end{equation}
Another representation is obtained by using the numbers of Stirling of the first kind $\stirling{n}{k}$
\begin{equation}\label{repre2}
   \brk[a]{n}_d=\frac{1}{d!}\sum_{k=0}^{d}\stirling{n}{k}n^k.
\end{equation}

In \cite{cigler}, Cigler defined the $d$-Hoggatt binomial coefficients as
\begin{equation}\label{simplinomial}
    \simplinomial{n}{k}_{d}=\frac{\brk[a]{n}_{d}!}{\brk[a]{k}_{d}!\brk[a]{n-k}_{d}!},
\end{equation}
where
\begin{equation}\label{simplitorial}
    \brk[a]{n}_{d}!=\prod_{k=1}^{n}\brk[a]{k}_{d},\ n\geq1,\ \brk[a]{0}_{d}!=1.
\end{equation}
For each $n$ the $d$-Hoggatt binomials are symmetric
\begin{equation}
    \simplinomial{n}{k}_d=\simplinomial{n}{n-k}_d.
\end{equation}

The first few $d$-simplinomial coefficients are:
\begin{align*}
    \simplinomial{n}{0}_d&=1=\simplinomial{n}{n}_d,\hspace{0.3cm}\simplinomial{n}{1}_d=\simplinomial{n}{n-1}_d=\brk[a]{n}_d,\\
    \simplinomial{4}{2}_d&=\frac{\brk[a]{3}_d\brk[a]{4}_d}{\brk[a]{2}_d},\\
    \simplinomial{5}{2}_d&=\simplinomial{5}{3}_d=\frac{\brk[a]{4}_d\brk[a]{5}_d}{\brk[a]{2}_d},\\
    \simplinomial{6}{2}_d&=\simplinomial{6}{4}_d=\frac{\brk[a]{5}_d\brk[a]{6}_d}{\brk[a]{2}_d},\hspace{0.3cm}\simplinomial{6}{3}_d=\frac{\brk[a]{4}_d\brk[a]{5}_d\brk[a]{6}_d}{\brk[a]{2}_d\brk[a]{3}_d}.
\end{align*}
The entries
\begin{equation}
    \simplinomial{n}{k}_2=\frac{1}{k+1}\binom{n}{k}\binom{n+1}{k}
\end{equation}
are the Narayana numbers. 
\begin{proposition}
For $d\geq1$,
\begin{equation}\label{eqn_simplitorial1}
    \brk[a]{n}_{d}!=\frac{d^{(n)}}{d^n}\brk[a]{n}_{d-1}!
\end{equation}
\end{proposition}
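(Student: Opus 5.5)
The identity can be checked factor by factor: we compare $\brk[a]{k}_d$ with $\brk[a]{k}_{d-1}$ for each fixed $k$ and then multiply over $k=1,\dots,n$. From the closed form $\brk[a]{k}_d=\binom{k+d-1}{d}$ we have
\begin{equation*}
\brk[a]{k}_d=\frac{(k+d-1)!}{d!\,(k-1)!}=\frac{k+d-1}{d}\cdot\frac{(k+d-2)!}{(d-1)!\,(k-1)!}=\frac{k+d-1}{d}\,\brk[a]{k}_{d-1},
\end{equation*}
valid for $d\geq1$ and $k\geq1$. Equivalently, using $\brk[a]{k}_d=k^{(d)}/d!$, the ratio $k^{(d)}/k^{(d-1)}=k+d-1$ divided by $d!/(d-1)!=d$ gives the same factor $\frac{k+d-1}{d}$.

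Next we take the product over $k=1,\dots,n$ and use the definition (\ref{simplitorial}):
\begin{equation*}
\brk[a]{n}_d!=\prod_{k=1}^{n}\frac{k+d-1}{d}\,\brk[a]{k}_{d-1}=\frac{1}{d^n}\prod_{k=1}^{n}(d+k-1)\cdot\brk[a]{n}_{d-1}!.
\end{equation*}
The product $\prod_{k=1}^{n}(d+k-1)=d(d+1)\cdots(d+n-1)$ is exactly the rising factorial $d^{(n)}$, which yields (\ref{eqn_simplitorial1}).

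The only point needing care is the boundary case $d=1$, where $\brk[a]{k}_0=\binom{k-1}{0}=1$. The formula then reads $n!=\frac{1^{(n)}}{1^n}\cdot 1=n!$, so the single-factor identity still holds there. The case $n=0$ is trivial, since both sides equal $1$.
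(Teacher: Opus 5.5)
Your proposal is correct and follows essentially the same approach as the paper: both write $\brk[a]{k}_d=\frac{k+d-1}{d}\brk[a]{k}_{d-1}$ for each factor and take the product over $k=1,\dots,n$ to obtain $\frac{d^{(n)}}{d^n}\brk[a]{n}_{d-1}!$. Your explicit check of the boundary case $d=1$ (where $\brk[a]{k}_0=1$) is a small addition that the paper leaves implicit.
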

\begin{proof}
From the definition of $d$-simplitorial Eq.(\ref{simplitorial})
\begin{align*}
    \brk[a]{n}_{d}!&=\prod_{k=1}^{n}\brk[a]{k}_{d}\\
    &=\prod_{k=1}^{n}\binom{k+d-1}{d}\\
    &=\prod_{k=1}^{n}\frac{k+d-1}{d}\binom{k+d-2}{d-1}\\
    &=\frac{d(d+1)\cdots(d+n-1)}{d^n}\prod_{k=1}^{n}\brk[a]{k}_{d-1}
\end{align*}
\end{proof}

\begin{proposition}
    \begin{equation}\label{eqn_simplitorial2}
        \brk[a]{n}_d!=\frac{n!(n+1)!(n+2)!\cdots(n+d-1)!}{(d!)^n0!1!2!\cdots(d-1)!}.
    \end{equation}
\end{proposition}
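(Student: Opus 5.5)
The plan is to compute the product directly from the closed form of each factor, without any induction on $d$. By definition Eq.~(\ref{simplitorial}),
\begin{equation*}
\brk[a]{n}_d!=\prod_{k=1}^{n}\binom{k+d-1}{d}=\prod_{k=1}^{n}\frac{(k+d-1)!}{d!\,(k-1)!}.
\end{equation*}
The factor $(d!)^{-n}$ comes out at once. What remains is to evaluate the telescoping ratio $\prod_{k=1}^{n}\frac{(k+d-1)!}{(k-1)!}$.

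For that ratio, write $\frac{(k+d-1)!}{(k-1)!}=\prod_{j=0}^{d-1}(k+j)$. Then interchange the two finite products:
\begin{equation*}
\prod_{k=1}^{n}\prod_{j=0}^{d-1}(k+j)=\prod_{j=0}^{d-1}\prod_{k=1}^{n}(k+j)=\prod_{j=0}^{d-1}\frac{(n+j)!}{j!}.
\end{equation*}
The numerator is $n!(n+1)!\cdots(n+d-1)!$ and the denominator is $0!1!\cdots(d-1)!$. Combined with the $(d!)^{-n}$, this is exactly Eq.~(\ref{eqn_simplitorial2}).

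As a cross-check, or as an alternative route, one can induct on $d$ using Eq.~(\ref{eqn_simplitorial1}). For $d=1$ both sides equal $n!$. For the inductive step, multiply the formula for $d-1$ by $\frac{d^{(n)}}{d^n}=\frac{(n+d-1)!}{(d-1)!\,d^n}$. This supplies the new factorials $(n+d-1)!$ and $(d-1)!$, and it turns $((d-1)!)^n$ into $(d!)^n$ via $(d-1)!\,d=d!$. So the same identity follows.

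The only delicate point is the bookkeeping in the product interchange, in particular checking that $\prod_{k=1}^n(k+j)=(n+j)!/j!$ also holds at the boundary $j=0$. I do not expect any real obstacle.
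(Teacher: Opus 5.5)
Your proof is correct, and your main argument takes a slightly different route from the paper's. The paper's proof is exactly your ``alternative route''. It iterates Eq.~(\ref{eqn_simplitorial1}) downward in $d$ to obtain $\brk[a]{n}_d!=\frac{d^{(n)}(d-1)^{(n)}\cdots 2^{(n)}1^{(n)}}{(d!)^n}$, then substitutes $j^{(n)}=\frac{(n+j-1)!}{(j-1)!}$.

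Your primary argument evaluates the same double product $\prod_{k=1}^{n}\prod_{j=0}^{d-1}(k+j)$ in one step by interchanging the two finite products. Your inner factor $\prod_{k=1}^{n}(k+j)=\frac{(n+j)!}{j!}$ is precisely the paper's $(j+1)^{(n)}$, and your $(d!)^{-n}$ is the paper's accumulated $\prod_{j=1}^{d} j^{-n}$. So the two proofs differ in organization rather than in substance.

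Your version is self-contained: it needs only the closed form $\binom{k+d-1}{d}=\frac{(k+d-1)!}{d!\,(k-1)!}$, not Proposition~1. Your inductive cross-check also makes explicit the base case $\brk[a]{n}_1!=n!=1^{(n)}$, which the paper's ``by iterating'' leaves implicit. The paper's route reuses the recurrence it has just established and displays the intermediate product of rising factorials.
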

\begin{proof}
By iterating Eq.(\ref{eqn_simplitorial1})
\begin{align*}
    \brk[a]{n}_d!&=\frac{d^{(n)}(d-1)^{(n)}\cdots2^{(n)}1^{(n)}}{(d!)^n}.
\end{align*}
Considering that $d^{(n)}=\frac{(n+d-1)!}{(d-1)!}$, then
\begin{align*}
    \brk[a]{n}_d!=\frac{n!(n+1)!(n+2)!\cdots(n+d-1)!}{0!1!2!\cdots(d-1)!(d!)^n}.
\end{align*}
The proof is reached.
\end{proof}

\begin{proposition}
    \begin{equation}\label{eqn_sim_prod}
        \simplinomial{n}{k}_d=\prod_{i=0}^{d-1}\frac{i!(n+i)!}{(k+i)!(n-k+i)!}.
    \end{equation}
\end{proposition}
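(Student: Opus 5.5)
The plan is to substitute the closed form of Eq.~(\ref{eqn_simplitorial2}) for each of the three simplitorials in the definition Eq.~(\ref{simplinomial}), and then simplify. Write Eq.~(\ref{eqn_simplitorial2}) in product form:
\begin{equation*}
    \brk[a]{m}_d!=\frac{1}{(d!)^m}\prod_{i=0}^{d-1}\frac{(m+i)!}{i!},
\end{equation*}
valid for every $m\geq0$. For $m=0$ the right-hand side equals $1$, consistent with $\brk[a]{0}_d!=1$, so the cases $k=0$ and $k=n$ need no separate treatment.

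Next form the quotient $\brk[a]{n}_d!/(\brk[a]{k}_d!\,\brk[a]{n-k}_d!)$ and handle its two factors separately.

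\textbf{Powers of $d!$.} These contribute
\begin{equation*}
    (d!)^{-n}\big/\bigl((d!)^{-k}(d!)^{-(n-k)}\bigr)=1,
\end{equation*}
so they cancel completely.

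\textbf{Products over $i$.} The three products run over the same index range $i=0,\dots,d-1$, so they combine factor by factor into
\begin{equation*}
    \prod_{i=0}^{d-1}\frac{(n+i)!/i!}{\bigl((k+i)!/i!\bigr)\bigl((n-k+i)!/i!\bigr)}=\prod_{i=0}^{d-1}\frac{i!\,(n+i)!}{(k+i)!\,(n-k+i)!}.
\end{equation*}
One $i!$ cancels against the numerator's $1/i!$, and the second $i!$ from the denominator moves to the numerator. This is exactly Eq.~(\ref{eqn_sim_prod}).

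There is no real obstacle. The only point needing care is the bookkeeping of the factor $0!1!\cdots(d-1)!$: it appears once in the denominator of $\brk[a]{n}_d!$ and twice in the denominator of the product $\brk[a]{k}_d!\,\brk[a]{n-k}_d!$. Its net effect is therefore a single copy in the numerator, namely $\prod_{i=0}^{d-1} i!$, which is what the formula displays.

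As a sanity check I would verify the case $d=1$, where the formula reduces to $\binom{n}{k}$, and the case $d=2$, where it gives
\begin{equation*}
    \frac{n!\,(n+1)!}{k!\,(k+1)!\,(n-k)!\,(n-k+1)!},
\end{equation*}
which agrees with the Narayana form $\frac{1}{k+1}\binom{n}{k}\binom{n+1}{k}$ stated earlier.
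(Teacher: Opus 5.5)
Your proposal is correct and follows the paper's route: substitute the closed form of Eq.~(\ref{eqn_simplitorial2}) for all three simplitorials, cancel the powers of $d!$, and combine the three products over $i$ factor by factor. Your remarks on the boundary cases $k=0$, $k=n$ and your $d=1$ and $d=2$ (Narayana) checks are accurate, though the argument does not need them.
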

\begin{proof}
From the definition of $d$-simplinomial coefficients and Eq.(\ref{eqn_simplitorial2})
\begin{align*}
    \simplinomial{n}{k}_d&=\frac{\brk[a]{n}_d!}{\brk[a]{k}_d!\brk[a]{n-k}_d!}=\frac{\frac{1}{(d!)^n}\prod_{i=0}^{d-1}\frac{(n+i)!}{i!}}{\frac{1}{(d!)^k}\prod_{i=0}^{d-1}\frac{(k+i)!}{i!}\frac{1}{(d!)^{n-k}}\prod_{i=0}^{d-1}\frac{(n-k+i)!}{i!}}\\
    &=\prod_{i=0}^{d-1}\frac{i!(n+i)!}{(k+i)!(n-k+i)!}.
\end{align*}
\end{proof}
The results in the Eqs. (\ref{eqn_simplitorial2}) and (\ref{eqn_sim_prod}) was obtained by induction method by Cigler in \cite{cigler}. Next, we will look for an $\s_d$-analogue of Pascal’s triangle for the $d$-Hoggatt binomial coefficients. The rising factorial fulfills the relation of type  binomial
\begin{equation*}
    (a+b)^{(n)}=\sum_{k=0}^{n}\binom{n}{k}(a)^{(n-k)}(b)^{(k)}
\end{equation*}
and involves the following identity
\begin{equation}\label{eqn_sum}
    \brk[a]{a+b}_n=\sum_{k=0}^{n}\brk[a]{a}_{n-k}\brk[a]{b}_k.
\end{equation}
From here, we have the following results.
\begin{proposition}
For $d\geq2$ and for $1\leq k\leq n$, we have the following $\s_d$-analogue of the Pascal triangle
\begin{equation}\label{Sd-pascal}
    \simplinomial{n+1}{k}_{d}=f(d;n,k)\simplinomial{n}{k}_d+\simplinomial{n}{k-1}_d,
\end{equation}
where
\begin{equation}
    f(d;n,k)=1+\frac{1}{\brk[a]{n+1-k}_{d}}\sum_{i=1}^{d-1}\brk[a]{n+1-k}_{d-i}\brk[a]{k}_i
\end{equation}
\end{proposition}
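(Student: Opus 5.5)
Divide both sides of (\ref{Sd-pascal}) by $\simplinomial{n}{k}_d$ and use the definition (\ref{simplinomial}). Since
\begin{equation*}
\frac{\simplinomial{n+1}{k}_d}{\simplinomial{n}{k}_d}=\frac{\brk[a]{n+1}_d}{\brk[a]{n+1-k}_d},\qquad
\frac{\simplinomial{n}{k-1}_d}{\simplinomial{n}{k}_d}=\frac{\brk[a]{k}_d}{\brk[a]{n+1-k}_d},
\end{equation*}
the claim is equivalent to the scalar identity
\begin{equation*}
\brk[a]{n+1}_d=\brk[a]{n+1-k}_d\, f(d;n,k)+\brk[a]{k}_d .
\end{equation*}
This holds provided $\brk[a]{n+1-k}_d\neq0$, which is true because $1\le k\le n$ gives $n+1-k\ge1$.

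Substituting the definition of $f(d;n,k)$, the right-hand side becomes
\begin{equation*}
\brk[a]{n+1-k}_d+\sum_{i=1}^{d-1}\brk[a]{n+1-k}_{d-i}\brk[a]{k}_i+\brk[a]{k}_d
=\sum_{i=0}^{d}\brk[a]{n+1-k}_{d-i}\brk[a]{k}_i ,
\end{equation*}
with the conventions $\brk[a]{m}_0=1$, so that the $i=0$ and $i=d$ terms are exactly the two extra summands. I now apply identity (\ref{eqn_sum}) with $a=n+1-k$, $b=k$ and the subscript equal to $d$. This gives
\begin{equation*}
\sum_{i=0}^{d}\brk[a]{n+1-k}_{d-i}\brk[a]{k}_i=\brk[a]{n+1}_d ,
\end{equation*}
which completes the argument.

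The main obstacle is making sure (\ref{eqn_sum}) is correctly justified with the right normalization, because the paper states it only briefly. I would verify it directly. Since $\brk[a]{a}_j=a^{(j)}/j!$, dividing the rising-factorial binomial theorem $(a+b)^{(d)}=\sum_{i=0}^{d}\binom{d}{i}a^{(d-i)}b^{(i)}$ by $d!$ gives precisely $\brk[a]{a+b}_d=\sum_{i}\brk[a]{a}_{d-i}\brk[a]{b}_i$. Equivalently, this is Vandermonde's identity for $\binom{a+b+d-1}{d}$, written as a convolution of $\binom{a+d-i-1}{d-i}$ and $\binom{b+i-1}{i}$. The convention $\brk[a]{m}_0=\binom{m-1}{0}=1$ is consistent with this formula, so the endpoint terms are handled correctly. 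The hypothesis $d\ge2$ only ensures that the middle sum is nonempty; for $d=1$ the same argument gives the ordinary Pascal rule with $f=1$.
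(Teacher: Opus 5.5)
Your proof is correct and is essentially the paper's argument: the paper writes $\simplinomial{n+1}{k}_d=\frac{\brk[a]{n}_d!}{\brk[a]{k}_d!\brk[a]{n+1-k}_d!}\brk[a]{n+1}_d$, expands $\brk[a]{n+1}_d=\brk[a]{(n+1-k)+k}_d$ by the same convolution identity (\ref{eqn_sum}), and splits off the $i=0$ and $i=d$ terms to obtain $\simplinomial{n}{k}_d$ and $\simplinomial{n}{k-1}_d$. This is your computation written multiplicatively rather than after dividing by $\simplinomial{n}{k}_d$; your check of (\ref{eqn_sum}) (the rising-factorial binomial theorem divided by $d!$) and of $\brk[a]{n+1-k}_d\neq0$ fills in details the paper leaves implicit.
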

\begin{proof}
From the definition of $d$-simplinomial Eq.(\ref{simplinomial})
\begin{align*}
    \simplinomial{n+1}{k}_{d}&=\frac{\brk[a]{n+1}_{d}!}{\brk[a]{k}_{d}!\brk[a]{n+1-k}_{d}!}
    =\frac{\brk[a]{n}_{d}!}{\brk[a]{k}_{d}!\brk[a]{n+1-k}_{d}!}\brk[a]{n+1}_d.
\end{align*}
By applying Eq.(\ref{eqn_sum}) with $\brk[a]{n+1}_d=\brk[a]{n+1-k+k}_d$,
\begin{align*}
    \simplinomial{n+1}{k}_d&=\frac{\brk[a]{n}_{d}!}{\brk[a]{k}_{d}!\brk[a]{n+1-k}_{d}!}\left(\brk[a]{n+1-k}_d+\brk[a]{k}_d+\sum_{i=1}^{d-1}\brk[a]{n+1-k}_{d-i}\brk[a]{k}_i\right)\\
    &=\frac{\brk[a]{n}_{d}!}{\brk[a]{k}_{d}!\brk[a]{n-k}_{d}!}+\frac{\brk[a]{n}_{d}!}{\brk[a]{k-1}_{d}!\brk[a]{n+1-k}_{d}!}\\
    &\hspace{1cm}+\frac{\brk[a]{n}_{d}!}{\brk[a]{k}_{d}!\brk[a]{n+1-k}_{d}!}\sum_{i=1}^{d-1}\brk[a]{n+1-k}_{d-i}\brk[a]{k}_i\\
    &=\simplinomial{n}{k}_d+\simplinomial{n}{k-1}_d+\frac{1}{\brk[a]{n+1-k}_{d}}\simplinomial{n}{k}_d\sum_{i=1}^{d-1}\brk[a]{n+1-k}_{d-i}\brk[a]{k}_i.    
\end{align*}
\end{proof}

When $d=2$, the Pascal triangle for the Narayana numbers is
\begin{equation}
    \simplinomial{n+1}{k}_2=\frac{n+2+k}{n+2-k}\simplinomial{n}{k}_2+\simplinomial{n}{k-1}_2.
\end{equation}
The following identity follows straightforwardly
\begin{equation}
    \binom{n+1}{k}\binom{n+2}{k}=\frac{n+2+k}{n+2-k}\binom{n}{k}\binom{n+1}{k}+\frac{k+1}{k}\binom{n}{k-1}\binom{n+1}{k-1}.
\end{equation}

\section{Simplicial $d$-polytopic calculus}

\subsection{Simplicial $d$-polytopic derivative}

\begin{definition}
Set $f(x)\in C^{d}(X)$. We define the simplicial $d$-polytopic derivative $\D_{S_{d}}f(x)$ of the function $f(x)$ by
\begin{equation}
    \D_{S_{d}}f(x)=\sum_{k=0}^{d-1}\binom{d-1}{k}\frac{1}{(k+1)!}X^{k}\D^{k+1}f(x).
\end{equation}
Some specializations are:
\begin{enumerate}
    \item For $d=1$, we obtain the usual derivative $\D_{S_{1}}=\D$.
    \item For $d=2$, we define the triangular derivative 
    \begin{equation*}
    \D_{T}=\D_{S_{2}}=\frac{1}{2}X\D^2+\D.
    \end{equation*}
    \item For $d=3$, we define the tetrahedral derivative 
    \begin{equation*}
    \D_{Te}=\D_{S_{3}}=\frac{1}{6}X^2\D^3+X\D^2+\D.
    \end{equation*}
    \item For $d=4$, we define the pentachoron derivative
    \begin{equation*}
    \D_{P}=\D_{S_{4}}=\frac{1}{24}X^3\D^4+\frac{1}{2}X^2\D^3+\frac{3}{2}X\D^2+\D.
    \end{equation*}
    \item For $d=5$, we define the hexateron derivative
    \begin{equation*}
    \D_{H}=\D_{S_{5}}=\frac{1}{120}X^4\D^5+\frac{1}{6}X^3\D^4+X^2\D^3+2X\D^2+\D.
    \end{equation*}
\end{enumerate}
\end{definition}

\begin{theorem}
Para todo par de funciones $f$ y $g$ en $C^{d}(\R)$ y para todo $c\in\R$
\begin{enumerate}
    \item $\D_{S_{d}}(f+g)(x)=(\D_{S_{d}}f)(x)+(\D_{S_{d}}g)(x)$.
    \item $\D_{S_{d}}(cf)(x)=c(\D_{S_{d}}f)(x)$.
\end{enumerate}
\end{theorem}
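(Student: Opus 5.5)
The plan is to reduce both assertions to the linearity of the ordinary derivative. The operator $\D_{S_d}$ in the Definition is a finite sum of terms $c_k X^k\D^{k+1}$ with constant coefficients $c_k=\binom{d-1}{k}\frac{1}{(k+1)!}$. It is therefore enough to check two things: each building block is linear, and a finite linear combination of linear operators is again linear.

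First, for $f,g\in C^d(\R)$ and $0\le k\le d-1$, both $f$ and $g$ are at least $k+1\le d$ times differentiable. The usual sum and constant-multiple rules for derivatives, applied inductively in $k$, give
\[
\D^{k+1}(f+g)=\D^{k+1}f+\D^{k+1}g,\qquad \D^{k+1}(cf)=c\,\D^{k+1}f .
\]
This is the only point where the hypothesis $f,g\in C^d(\R)$ enters: it guarantees that every derivative appearing in $\D_{S_d}$ exists, up to order $d$.

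Second, multiplication by $X^k$, i.e.\ $h(x)\mapsto x^k h(x)$, distributes over sums and commutes with scalars. So $X^k\D^{k+1}$ is linear for each $k$.

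Third, for item 1 we multiply the identity for $X^k\D^{k+1}(f+g)$ by $\binom{d-1}{k}\frac{1}{(k+1)!}$ and sum over $k=0,\dots,d-1$. Regrouping this finite sum yields $\D_{S_d}(f+g)=\D_{S_d}f+\D_{S_d}g$. For item 2 we do the same and pull the factor $c$ out of the finite sum, which gives $\D_{S_d}(cf)=c\,\D_{S_d}f$.

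There is no real obstacle in this argument. The only care needed is the regularity bookkeeping: the highest derivative used is $\D^d$, which occurs at $k=d-1$, and this matches the $C^d$ assumption exactly.
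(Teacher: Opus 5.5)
Your argument is correct and complete. Linearity of each $\D^{k+1}$ (available up to order $d$ by the $C^{d}$ hypothesis), of multiplication by $x^{k}$, and of finite constant-coefficient combinations gives both items; the paper's proof environment for this theorem is left empty, so you are supplying the routine verification it omits.
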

\begin{proof}
    
\end{proof}
\begin{theorem}
For all $n\in\N$,
\begin{equation}
    \D_{S_{d}}x^n=\brk[a]{n}_{d}x^{n-1}.
\end{equation}
\end{theorem}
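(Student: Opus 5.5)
The plan is to apply the definition of $\D_{S_d}$ term by term to the monomial $x^n$ and then recognize the resulting sum as the representation Eq.~(\ref{repre1}) of $\brk[a]{n}_d$.

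First, for each $0\leq k\leq d-1$ the ordinary iterated derivative is
\begin{equation*}
\D^{k+1}x^n=\frac{n!}{(n-k-1)!}x^{n-k-1}=(k+1)!\binom{n}{k+1}x^{n-k-1}.
\end{equation*}
This formula is understood to vanish when $k+1>n$. Multiplying by $X^k$ shifts the exponent back up, so
\begin{equation*}
\frac{1}{(k+1)!}X^k\D^{k+1}x^n=\binom{n}{k+1}x^{n-1}.
\end{equation*}
The factor $(k+1)!$ produced by differentiation cancels exactly the normalizing factor $1/(k+1)!$ in the definition of $\D_{S_d}$.

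Summing over $k$ with the weights $\binom{d-1}{k}$ then gives
\begin{equation*}
\D_{S_d}x^n=\left(\sum_{k=0}^{d-1}\binom{d-1}{k}\binom{n}{k+1}\right)x^{n-1}.
\end{equation*}
By Eq.~(\ref{repre1}), which comes from Vandermonde's identity with $\binom{n+d-1}{d}=\sum_k\binom{d-1}{k}\binom{n}{d-1-k+\dots}$ suitably reindexed, the bracket equals $\brk[a]{n}_d$. This proves the claim.

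The only point requiring care is the boundary behaviour: terms with $k+1>n$, and the case $n=0$. For $n=0$ every term vanishes, and $\brk[a]{0}_d=\binom{d-1}{d}=0$, so the formula holds with the convention $0\cdot x^{-1}=0$. For $k+1>n$ the binomial $\binom{n}{k+1}$ is zero, consistent with $\D^{k+1}x^n=0$. No step is a real obstacle; the only thing to double-check is the reindexing in Vandermonde's identity behind Eq.~(\ref{repre1}), namely $\binom{n+d-1}{d}=\sum_{j}\binom{d-1}{d-j}\binom{n}{j}=\sum_k\binom{d-1}{k}\binom{n}{k+1}$ with $j=k+1$ and $\binom{d-1}{d-1-k}=\binom{d-1}{k}$.
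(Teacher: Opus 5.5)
Your proof is correct and matches the paper's argument: apply the definition termwise, cancel the $(k+1)!$ to get $\binom{d-1}{k}\binom{n}{k+1}x^{n-1}$, and identify the sum via the Vandermonde representation Eq.~(\ref{repre1}). The paper splits into the cases $n>d$ and $n\le d$, while you handle the vanishing terms uniformly through $\binom{n}{k+1}=0$ for $k+1>n$ and also check $n=0$ explicitly; this is the same point expressed more cleanly.
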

\begin{proof}
If $n>d$, then
\begin{align*}
    \D_{\s_{d}}x^{n}&=\sum_{k=0}^{d-1}\binom{d-1}{k}\frac{1}{(k+1)!}X^{k}\D^{k+1}x^n\\
    &=\sum_{k=0}^{d-1}\binom{d-1}{k}\frac{1}{(k+1)!}n(n-1)(n-2)\cdots(n-k)x^{k}x^{n-k-1}\\
    &=\sum_{k=0}^{d-1}\binom{d-1}{k}\binom{n}{k+1}x^{n-1}\\
    &=\brk[a]{n}_{d}x^{n-1}.
\end{align*}
If $n\leq d$, then
\begin{align*}
    \D_{\s_d}x^n&=\sum_{k=0}^{n-1}\binom{d-1}{k}\binom{n}{k+1}x^{n-1}\\
    &=\sum_{k=0}^{d-1}\binom{d-1}{k}\binom{n}{k+1}x^{n-1}=\brk[a]{n}_dx^{n-1}
\end{align*}
since $\binom{n}{k+1}=0$ when $n\leq k\leq d-1$.
\end{proof}

\begin{theorem}
For all $k\in\N$,
    \begin{equation}
        \D_{\s_d}^kx^n=\frac{\brk[a]{n}_d!}{\brk[a]{n-k}_d!}x^{n-k}.
    \end{equation}
\end{theorem}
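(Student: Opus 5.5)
We argue by induction on $k$, using only the previous theorem $\D_{\s_d}x^n=\brk[a]{n}_dx^{n-1}$ and the linearity of $\D_{\s_d}$.

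\emph{Base case.} For $k=0$ the operator $\D_{\s_d}^0$ is the identity. The right-hand side is $\frac{\brk[a]{n}_d!}{\brk[a]{n}_d!}x^n=x^n$, so the formula holds.

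\emph{Inductive step.} Assume $\D_{\s_d}^kx^n=\frac{\brk[a]{n}_d!}{\brk[a]{n-k}_d!}x^{n-k}$ for some $k$ with $k+1\le n$. Apply $\D_{\s_d}$ once more. Since the coefficient is a constant, part 2 of the linearity theorem lets us pull it out. The previous theorem, applied to $x^{n-k}$, then gives
\begin{equation*}
\D_{\s_d}^{k+1}x^n=\frac{\brk[a]{n}_d!}{\brk[a]{n-k}_d!}\,\brk[a]{n-k}_d\,x^{n-k-1}=\frac{\brk[a]{n}_d!}{\brk[a]{n-k-1}_d!}\,x^{n-(k+1)}.
\end{equation*}
The last equality is the identity $\brk[a]{m}_d!=\brk[a]{m}_d\,\brk[a]{m-1}_d!$ with $m=n-k\ge1$, which is immediate from the definition Eq.(\ref{simplitorial}). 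This proves the formula for all $0\le k\le n$.

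\emph{The case $k>n$.} This is the only real obstacle: the factorial $\brk[a]{n-k}_d!$ at a negative argument is not defined by Eq.(\ref{simplitorial}). I would handle it by the convention that $1/\brk[a]{m}_d!=0$ for negative integers $m$, and show that the left side also vanishes. At $k=n$ the formula gives $\D_{\s_d}^nx^n=\brk[a]{n}_d!$, a constant. One further application annihilates it, because $\brk[a]{0}_d=\binom{d-1}{d}=0$; equivalently, every term of $\D_{\s_d}$ contains at least one ordinary derivative $\D$. Hence $\D_{\s_d}^kx^n=0$ for all $k>n$, which matches the convention. Alternatively, one can restrict the statement to $k\le n$ and remark on this case separately.
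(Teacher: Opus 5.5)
Your proof is correct. The paper states this theorem with no proof at all, and your induction on $k$, using the preceding result $\D_{\s_d}x^n=\brk[a]{n}_dx^{n-1}$ together with $\brk[a]{m}_d!=\brk[a]{m}_d\,\brk[a]{m-1}_d!$, is clearly the intended argument. Your treatment of $k>n$ also settles a case the paper's statement leaves undefined: the left side vanishes because every term of $\D_{\s_d}$ contains a derivative, which matches the convention $1/\brk[a]{m}_d!=0$ for $m<0$.
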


\begin{definition}
Set $f(x)\in C^{d}(X)$. We define a second simplicial $d$-polytopic derivative $\D_{\s^\prime_{d}}f(x)$ of the function $f(x)$ by
   \begin{equation}\label{def2}
        \D_{\s^\prime_d}=\frac{1}{d!}\sum_{k=0}^{d}\stirling{d}{k}X^{-1}(XD)^k.
    \end{equation}
\end{definition}
\begin{theorem}
For all $n\in\N$,
\begin{equation}
    \D_{\s^\prime_{d}}x^n=\brk[a]{n}_{d}x^{n-1}.
\end{equation}
\end{theorem}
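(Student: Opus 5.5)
The plan is to compute the action of $\D_{\s^\prime_d}$ on $x^n$ directly from Eq.~(\ref{def2}). The key observation is that $x^n$ is an eigenfunction of the Euler operator $XD$: since $(XD)x^n = x\cdot nx^{n-1}=nx^n$, induction on $k$ gives $(XD)^k x^n = n^k x^n$ for every $k\geq 0$, with $(XD)^0$ the identity. Applying $X^{-1}$, i.e.\ multiplication by $x^{-1}$, then gives $X^{-1}(XD)^k x^n = n^k x^{n-1}$. By linearity of each operator in the sum,
\begin{equation*}
    \D_{\s^\prime_d}x^n=\frac{1}{d!}\sum_{k=0}^{d}\stirling{d}{k}n^k\,x^{n-1}.
\end{equation*}

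It then remains to identify the scalar $\frac{1}{d!}\sum_{k=0}^{d}\stirling{d}{k}n^k$ with $\brk[a]{n}_d$. For this I will use the generating identity for the unsigned Stirling numbers of the first kind, $x^{(d)}=x(x+1)\cdots(x+d-1)=\sum_{k=0}^{d}\stirling{d}{k}x^k$. This is proved by induction on $d$ from the recurrence $\stirling{d+1}{k}=d\stirling{d}{k}+\stirling{d}{k-1}$, obtained by multiplying $x^{(d)}$ by $(x+d)$. Evaluating at $x=n$ gives $\sum_k \stirling{d}{k}n^k=n^{(d)}$. Combining this with $\brk[a]{n}_d=\frac{n^{(d)}}{d!}$ from Section~2 yields the claim. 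This is exactly the representation Eq.~(\ref{repre2}); there the upper index of the Stirling symbol is misprinted as $n$ but should read $d$.

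The only delicate point is the case $n=0$, where $X^{-1}$ formally produces $x^{-1}$. Here the $k=0$ term has coefficient $\stirling{d}{0}=0$ for $d\geq1$, and every term with $k\geq1$ carries the factor $0^k=0$. So the whole expression vanishes, consistent with $\brk[a]{0}_d=0$. One can also avoid the issue by noting that for $k\geq1$ the operator $X^{-1}(XD)^k$ equals $D(XD)^{k-1}$, which never leaves the polynomials.

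I expect no real obstacle. The main thing to get right is the Stirling identity with the correct index, together with the convention that the $k=0$ term disappears.
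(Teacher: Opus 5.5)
Your proposal is correct and follows the paper's proof: it uses $(XD)^k x^n = n^k x^n$ and then the Stirling-number representation Eq.~(\ref{repre2}). You are also right that the upper index in Eq.~(\ref{repre2}) should be $d$ rather than $n$, and your treatment of the $n=0$ case is more careful than the paper's.
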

\begin{proof}
From the definition of derivative $\D_{\s^\prime_d}$ Eq.(\ref{def2}) we have
    \begin{align*}
        \D_{\s^\prime_d}x^n&=\frac{1}{d!}\sum_{k=0}^{d}\stirling{d}{k}X^{-1}(XD)^kx^n\\
        &=\frac{1}{d!}\sum_{k=0}^{d}\stirling{d}{k}n^kx^{n-1}=\brk[a]{n}_dx^{n-1},
    \end{align*}
where we have used the representation for $\s_d$-numbers Eq.(\ref{repre2}).
\end{proof}

It would be very interesting to show that the definitions of $\s_d$-derivatives $\D_{\s_d}$ and $\D_{\s^\prime_d}$ are equivalent.

\begin{example}
Provided that the definitions of $\s_d$-derivatives are equivalent, then
\begin{equation}\label{eqn_KT}
    {}_{1}F_{1}(1-d;2;-x)=\frac{x^{-1}}{d!}\sum_{k=0}^{d}\stirling{d}{k}\mathrm{T}_k(x)
\end{equation}
where $_{1}F_{1}(a;b;x)$ is the Kummer confluent hypergeometric function and $\mathrm{T}_n(x)$ are the Touchard polynomials of degree $n$.
On the one side
    \begin{align*}
        \D_{S_{d}}e^{x}&=\sum_{k=0}^{d-1}\binom{d-1}{k}\frac{1}{(k+1)!}X^k\D^{k+1}e^x\\
        &=\sum_{k=0}^{d-1}\binom{d-1}{k}\frac{x^k}{(k+1)!}e^x\\
        &=e^{x}\sum_{k=0}^{d-1}\binom{d-1}{k}\frac{x^k}{(k+1)!}\\
        &=e^{x}\ _{1}F_{1}(1-d;2;-x).
    \end{align*}
On the other side
\begin{align*}
    \D_{\s^{\prime}_d}e^x&=\frac{1}{d!}\sum_{k=0}^{d}\stirling{d}{k}X^{-1}(XD)^ke^x\\
    &=\frac{x^{-1}}{d!}\sum_{k=0}^{d}\stirling{d}{k}\sum_{i=1}^{k}\stirlingII{k}{i}X^iD^ie^x\\
    &=\frac{x^{-1}e^x}{d!}\sum_{k=0}^{d}\stirling{d}{k}\sum_{i=0}^{k}\stirlingII{k}{i}x^i\\
    &=\frac{x^{-1}e^x}{d!}\sum_{k=0}^{d}\stirling{d}{k}\mathrm{T}_k(x).
\end{align*}
The identity in Eq.(\ref{eqn_KT}) is proved.
\end{example}

\begin{theorem}
For $d\geq2$
    \begin{equation}
       \D_{\s_d}(fg)=\D_{\s_d}f\cdot g+f\D_{\s_d}f+\sum_{k=1}^{d-1}\binom{d-1}{k}\frac{1}{(k+1)!}x^{k}\sum_{i=1}^{k}\binom{k+1}{i}f^{(k+1-i)}g^{(i)}.
   \end{equation}
\end{theorem}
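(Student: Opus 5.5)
The plan is to expand $\D_{\s_d}(fg)$ directly from the definition of $\D_{\s_d}$ and apply the ordinary Leibniz rule to each term. (In the statement the second term $f\D_{\s_d}f$ should read $f\D_{\s_d}g$; we prove that version.) For $f,g\in C^{d}$ and each $0\le k\le d-1$,
\begin{equation*}
\D^{k+1}(fg)=\sum_{i=0}^{k+1}\binom{k+1}{i}f^{(k+1-i)}g^{(i)}.
\end{equation*}
Substituting this into $\D_{\s_d}(fg)=\sum_{k=0}^{d-1}\binom{d-1}{k}\frac{1}{(k+1)!}x^{k}\D^{k+1}(fg)$ gives a double sum.

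In this double sum we separate, for each $k$, the two extreme indices $i=0$ and $i=k+1$ from the interior indices $1\le i\le k$.

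The $i=0$ terms are $f^{(k+1)}g$. Summed over $k$ with the weights $\binom{d-1}{k}\frac{x^k}{(k+1)!}$, they give exactly $(\D_{\s_d}f)\cdot g$.

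The $i=k+1$ terms are $fg^{(k+1)}$, and by the same reasoning they sum to $f\,\D_{\s_d}g$.

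The interior indices remain. For $k=0$ this range $1\le i\le 0$ is empty, so the outer sum effectively starts at $k=1$. What is left is precisely
\begin{equation*}
\sum_{k=1}^{d-1}\binom{d-1}{k}\frac{1}{(k+1)!}x^{k}\sum_{i=1}^{k}\binom{k+1}{i}f^{(k+1-i)}g^{(i)},
\end{equation*}
which is the correction term in the statement. The hypothesis $d\ge2$ only ensures that this sum is nonempty; for $d=1$ it vanishes and we recover the ordinary product rule.

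There is no real obstacle here. The only care needed is the bookkeeping of the extreme indices $i=0$ and $i=k+1$, so that they are neither double-counted nor dropped, and the observation that the $k=0$ term contributes nothing to the remainder. Linearity of $\D_{\s_d}$, from the preceding theorem, justifies rearranging the finite sums.
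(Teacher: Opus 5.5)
Your proof is correct and is essentially the paper's own argument: expand via the definition, apply the Leibniz rule to each $\D^{k+1}(fg)$, and split off the $i=0$ and $i=k+1$ terms, noting that the $k=0$ interior range is empty. You are also right that the second term should read $f\D_{\s_d}g$; the paper's proof carries the same typo in its final line.
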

\begin{proof}
    \begin{align*}
        \D_{\s_d}(fg)&=\sum_{k=0}^{d-1}\binom{d-1}{k}\frac{1}{(k+1)!}X^{k}\D^{k+1}(fg)\\
        &=\sum_{k=0}^{d-1}\binom{d-1}{k}\frac{1}{(k+1)!}x^{k}\sum_{i=0}^{k+1}\binom{k+1}{i}f^{(k+1-i)}g^{(i)}\\
        &=\sum_{k=0}^{d-1}\binom{d-1}{k}\frac{1}{(k+1)!}x^{k}\left(f^{(k+1)}g+fg^{(k+1)}+\sum_{i=1}^{k}\binom{k+1}{i}f^{(k+1-i)}g^{(i)}\right)\\
        &=\D_{\s_d}f\cdot g+f\D_{\s_d}f+\sum_{k=1}^{d-1}\binom{d-1}{k}\frac{1}{(k+1)!}x^{k}\sum_{i=1}^{k}\binom{k+1}{i}f^{(k+1-i)}g^{(i)}.
    \end{align*}
\end{proof}

\subsection{Simplicial $d$-polytopic binomial theorem}

\begin{definition}
For all $n\in\N$ and for $d\geq2$, we define the $\s_d$-analogue of the binomial theorem by
    \begin{equation}
        (x\oplus_{d}y)^{(n)}=
        \begin{cases}
        \sum_{k=0}^{n}\simplinomial{n}{k}_{d}x^{n-k}y^{k},&\text{ if }n>0;\\
        1,&\text{ if }n=0.
        \end{cases}
    \end{equation}
\end{definition}
From \cite{cigler2}, 
\begin{equation}
    (1\oplus_dx)^{(n)}=\sum_{k=0}^{n}\simplinomial{n}{k}_dx^k
\end{equation}
is the $d$-Hoggatt polynomial. Then we can see that $(x\oplus_dy)^{(n)}$ is the bivariate $d$-Hoggatt polynomials.
In this regard, $(x\oplus_2y)^{(n)}$ is the bivariate Narayana polynomial of order $n$.
\begin{theorem}
For all $n\in\N$, the bivariate $d$-Hoggatt polynomials holds the recurrence
    \begin{multline}
        (x\oplus_dy)^{(n+1)}=(x+y)(x\oplus_dy)^{(n)}\\
            +\sum_{k=1}^{n}\frac{1}{\brk[a]{n+1-k}_{d}}\simplinomial{n}{k}_d\sum_{i=1}^{d-1}\brk[a]{n+1-k}_{d-i}\brk[a]{k}_ix^{n+1-k}y^k.
    \end{multline}
\end{theorem}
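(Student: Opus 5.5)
We expand $(x\oplus_dy)^{(n+1)}$ coefficient by coefficient and apply the $\s_d$-analogue of the Pascal triangle, Eq.~(\ref{Sd-pascal}), to each interior coefficient. By definition,
\begin{equation*}
(x\oplus_dy)^{(n+1)}=x^{n+1}+y^{n+1}+\sum_{k=1}^{n}\simplinomial{n+1}{k}_dx^{n+1-k}y^k .
\end{equation*}
The boundary terms use $\simplinomial{n+1}{0}_d=\simplinomial{n+1}{n+1}_d=1$. For $1\leq k\leq n$ we substitute
\begin{equation*}
\simplinomial{n+1}{k}_d=\simplinomial{n}{k}_d+\simplinomial{n}{k-1}_d+\frac{1}{\brk[a]{n+1-k}_d}\simplinomial{n}{k}_d\sum_{i=1}^{d-1}\brk[a]{n+1-k}_{d-i}\brk[a]{k}_i,
\end{equation*}
which is the last line of the proof of Eq.~(\ref{Sd-pascal}), with $f(d;n,k)$ written out.

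The sum over $1\le k\le n$ now splits into three pieces. The first, together with $x^{n+1}$, is
\begin{equation*}
\sum_{k=0}^{n}\simplinomial{n}{k}_dx^{n+1-k}y^k=x\,(x\oplus_dy)^{(n)}.
\end{equation*}
The second is $\sum_{k=1}^{n}\simplinomial{n}{k-1}_dx^{n+1-k}y^k$. We shift the index $k\mapsto k+1$ and add $y^{n+1}=\simplinomial{n}{n}_dx^0y^{n+1}$; this gives
\begin{equation*}
\sum_{k=0}^{n}\simplinomial{n}{k}_dx^{n-k}y^{k+1}=y\,(x\oplus_dy)^{(n)}.
\end{equation*}
Together these two pieces give $(x+y)(x\oplus_dy)^{(n)}$. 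The third piece is exactly the double sum in the statement, left untouched.

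The only points needing care are bookkeeping.
\begin{itemize}
\item Eq.~(\ref{Sd-pascal}) holds only for $1\leq k\leq n$, so the endpoints $k=0$ and $k=n+1$ must be handled separately, as above.
\item In the correction term, $\brk[a]{n+1-k}_d\neq0$ for $1\le k\le n$, so no division by zero occurs.
\item The statement assumes $d\ge2$ implicitly. For $d=1$ the inner sum is empty and the identity reduces to the ordinary binomial recurrence.
\item The case $n=0$ is consistent with the convention $(x\oplus_dy)^{(0)}=1$: the correction sum is empty and we get $x+y$.
\end{itemize}
No step is a genuine obstacle. The main thing to check is that the index shift in the second piece correctly absorbs $y^{n+1}$.
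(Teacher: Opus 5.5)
Your proposal is correct and matches the paper's proof. Both split off the boundary terms $x^{n+1}$ and $y^{n+1}$, substitute the expanded $\s_d$-Pascal relation for $1\le k\le n$, and regroup the first two pieces as $x(x\oplus_dy)^{(n)}+y(x\oplus_dy)^{(n)}$, leaving the correction sum untouched; your explicit treatment of the index shift and the endpoint and $n=0$ cases is more careful than the paper's.
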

\begin{proof}
From Pascal triangle Eq.(\ref{Sd-pascal}),
\begin{align*}
    (x\oplus_dy)^{(n+1)}&=\sum_{k=0}^{n+1}\simplinomial{n+1}{k}_{d}x^{n+1-k}y^{k}=x^{n+1}+y^{n+1}+\sum_{k=1}^{n}\simplinomial{n+1}{k}_{d}x^{n+1-k}y^{k}\\
    &=x^{n+1}+y^{n+1}+\sum_{k=1}^{n}\simplinomial{n}{k}_dx^{n+1-k}y^k+\sum_{k=1}^{n}\simplinomial{n}{k-1}_dx^{n+1-k}y^k\\
    &\hspace{1cm}+\sum_{k=1}^{n}\frac{1}{\brk[a]{n+1-k}_{d}}\simplinomial{n}{k}_d\sum_{i=1}^{d-1}\brk[a]{n+1-k}_{d-i}\brk[a]{k}_ix^{n+1-k}y^k.
\end{align*}
Collect terms and factorize
\begin{align*}
    (x\oplus_dy)^{(n+1)}&=x\sum_{k=0}^{n}\simplinomial{n}{k}_dx^{n-k}y^k+y\sum_{k=0}^{n}\simplinomial{n}{k}_dx^{n-k}y^{k}\\
    &\hspace{1cm}+\sum_{k=1}^{n}\frac{1}{\brk[a]{n+1-k}_{d}}\simplinomial{n}{k}_d\sum_{i=1}^{d-1}\brk[a]{n+1-k}_{d-i}\brk[a]{k}_ix^{n+1-k}y^k\\
    &=(x+y)(x\oplus_dy)^{(n)}\\
    &\hspace{2cm}+\sum_{k=1}^{n}\frac{1}{\brk[a]{n+1-k}_{d}}\simplinomial{n}{k}_d\sum_{i=1}^{d-1}\brk[a]{n+1-k}_{d-i}\brk[a]{k}_ix^{n+1-k}y^k.
\end{align*}
\end{proof}
When $d=2$,
\begin{multline}
    (x\oplus_2y)^{(n+1)}=(x+y)(x\oplus_2y)^{(n)}\\
    +\sum_{k=1}^{n}\frac{2k}{(k+1)(n+2-k)}\binom{n}{k}\binom{n+1}{k}x^{n+1-k}y^k.
\end{multline}

\begin{theorem}
For all $n\in\N$, the $\s_d$-derivative of the bivariate Hoggatt polynomials is
    \begin{equation}
        \D_{S_{d}}(x\oplus_{d}a)^{(n)}=\brk[a]{n}_{d}(x\oplus_{d}a)^{(n-1)}.
    \end{equation}
\end{theorem}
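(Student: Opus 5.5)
The plan is to expand $(x\oplus_d a)^{(n)}$ by its definition and differentiate term by term. Linearity of $\D_{S_d}$ (the theorem on sums and scalar multiples) lets us do this, with $a$ treated as a constant. The power rule $\D_{S_d}x^m=\brk[a]{m}_d x^{m-1}$ then handles each monomial. Writing the polynomial with the powers of $x$ displayed,
\begin{equation*}
(x\oplus_d a)^{(n)}=\sum_{k=0}^{n}\simplinomial{n}{k}_d x^{n-k}a^{k},
\end{equation*}
we get
\begin{equation*}
\D_{S_d}(x\oplus_d a)^{(n)}=\sum_{k=0}^{n}\simplinomial{n}{k}_d \brk[a]{n-k}_d\, x^{n-k-1}a^{k}.
\end{equation*}
The term $k=n$ vanishes because $\brk[a]{0}_d=\binom{d-1}{d}=0$, so the sum effectively runs over $0\le k\le n-1$.

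The key step is the absorption identity for the $d$-Hoggatt coefficients:
\begin{equation*}
\simplinomial{n}{k}_d\brk[a]{n-k}_d=\brk[a]{n}_d\simplinomial{n-1}{k}_d,\qquad 0\le k\le n-1.
\end{equation*}
It follows directly from the factorial form in Eq.~(\ref{simplinomial}). The left side equals $\frac{\brk[a]{n}_d!}{\brk[a]{k}_d!\,\brk[a]{n-k-1}_d!}$, since $\brk[a]{n-k}_d!=\brk[a]{n-k}_d\,\brk[a]{n-k-1}_d!$. Pulling out $\brk[a]{n}_d$ from $\brk[a]{n}_d!=\brk[a]{n}_d\,\brk[a]{n-1}_d!$ gives the right side. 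Substituting this identity, the sum becomes
\begin{equation*}
\brk[a]{n}_d\sum_{k=0}^{n-1}\simplinomial{n-1}{k}_d x^{n-1-k}a^k=\brk[a]{n}_d(x\oplus_d a)^{(n-1)}.
\end{equation*}

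The only real obstacle is the edge cases. For $n=0$ the left side is $\D_{S_d}1=0$, and the right side is $\brk[a]{0}_d\cdot(\text{anything})=0$, so the identity holds under any convention for $(x\oplus_d a)^{(-1)}$. For $n=1$ the definition gives $x+a$, whose derivative is $1=\brk[a]{1}_d(x\oplus_d a)^{(0)}$, consistent with the convention $(x\oplus_d a)^{(0)}=1$. We must also check that the power rule is applied correctly to $x^0$, giving $0$. Here we rely on $\brk[a]{0}_d=0$, not on the expression $x^{-1}$. This fixes the boundary term $k=n$ cleanly.
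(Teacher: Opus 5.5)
Your proposal is correct and follows essentially the same route as the paper: expand by definition, apply the power rule termwise, use $\brk[a]{n-k}_d!=\brk[a]{n-k}_d\,\brk[a]{n-k-1}_d!$, and factor out $\brk[a]{n}_d$ to recognize $(x\oplus_d a)^{(n-1)}$. The only differences are cosmetic. You dispose of the $k=n$ term via $\brk[a]{0}_d=0$, whereas the paper separates off the constant $a^n$ before differentiating, and you also check the $n=0,1$ edge cases explicitly, which the paper omits.
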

\begin{proof}
From the definition of $\s_d$-derivative
    \begin{align*}
        \D_{S_{d}}(x\oplus_{d}a)^{(n)}&=\D_{S_{d}}\left(\sum_{k=0}^{n-1}\simplinomial{n}{k}_{d}x^{n-k}a^{k}+a^n\right)\\
        &=\sum_{k=0}^{n-1}\frac{\brk[a]{n}_{d}!}{\brk[a]{k}_{d}!\brk[a]{n-k}_{d}!}\brk[a]{n-k}_{d}x^{n-k-1}a^{k}\\
        &=\sum_{k=0}^{n-1}\frac{\brk[a]{n}_{d}!}{\brk[a]{k}_{d}!\brk[a]{n-k-1}_{d}!}x^{n-k-1}a^{k}\\
        &=\brk[a]{n}_{d}\sum_{k=0}^{n-1}\frac{\brk[a]{n-1}_{d}!}{\brk[a]{k}_{d}!\brk[a]{n-1-k}_{d}!}x^{n-k-1}a^{k}\\
        &=\brk[a]{n}_{d}(x\oplus_{d}a)^{(n-1)}.
    \end{align*}
\end{proof}

\subsection{Simplicial $d$-polytopic-type exponential function}

\begin{definition}
Define the simplicial $d$-polytopic-type exponential function (shortly $\s_d$-exponential function) 
\begin{equation}
    \exp_{d}(x)=\sum_{n=0}^{\infty}\frac{x^n}{\brk[a]{n}_{d}!}.
\end{equation}
\end{definition}
From Eq.(\ref{eqn_simplitorial2})
\begin{equation}
    \exp_d(x)=\prod_{i=0}^{d-1}i!\sum_{n=0}^{\infty}\frac{(d!x)^n}{n!(n+1)!(n+2)!\cdots(n+d-1)!}.
\end{equation}
As $(m)_n=(n+m-1)!/(m-1)!$, then
\begin{equation}\label{exp_hyper}
    \exp_d(x)=\sum_{n=0}^{\infty}\frac{(d!x)^n}{n!(2)_n(3)_n\cdots(d)_n}={}_{0}F_{d-1}(;2,3,\ldots,d;d!x),
\end{equation}
where
\begin{equation}
    {}_{p}F_{q}(a_1,\ldots,a_p;b_1,\ldots,b_q;x)=\sum_{n=0}^{\infty}\frac{(a_{1})_{n}(a_{2})_{n}\cdots(a_{r})_{n}}{(b_{1})_{n}(b_{2})_{n}\cdots(b_{s})_{n}}\frac{x^n}{n!}
\end{equation}
is the generalized hypergeometric function. Some specialization of $\s_d$-exponential functions are
\begin{align}
    \e^x=\exp_{1}(x)&={}_{0}F_{0}(;;x),\\
    \exp_{\mathrm{T}}(x)=\exp_{2}(x)&={}_{0}F_{1}(;2;2x),\\
    \exp_{\mathrm{Te}}(x)=\exp_{3}(x)&={}_{0}F_{2}(;2,3;6x),\\
    \exp_{\mathrm{P}}(x)=\exp_{4}(x)&={}_{0}F_{3}(;2,3,4;24x),\\
    \exp_{\mathrm{H}}(x)=\exp_{5}(x)&={}_{0}F_{4}(;2,3,4,5;120x).
\end{align}

\begin{theorem}
The $\s_d$-exponential function $\exp_{d}(z)$ is an entire function for all $d\in\N$.
\end{theorem}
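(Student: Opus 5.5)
The plan is to show that the power series $\exp_d(z)=\sum_{n\ge0} z^n/\brk[a]{n}_d!$ has infinite radius of convergence. Then it converges absolutely and locally uniformly on all of $\C$, so it defines an entire function. For $d=0$ every $\brk[a]{n}_0=1$, so the statement only makes sense for $d\ge1$, and I will assume $d\ge1$ throughout.

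I will use the ratio test. Write $a_n=1/\brk[a]{n}_d!$. Then $a_n/a_{n+1}=\brk[a]{n+1}_d=\binom{n+d}{d}$. For $d\ge1$ we have $\binom{n+d}{d}\ge n+1\to\infty$, so $\abs{a_{n+1}z^{n+1}}/\abs{a_n z^n}=\abs{z}/\brk[a]{n+1}_d\to0$ for every fixed $z\in\C$. Hence the series converges absolutely for every $z$, and the radius of convergence is $R=\lim_{n\to\infty}\brk[a]{n+1}_d=\infty$.

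As a cross-check I would also use the closed form from Eq.(\ref{eqn_simplitorial2}):
\[
\brk[a]{n}_d! = \frac{n!(n+1)!\cdots(n+d-1)!}{(d!)^n\,0!1!\cdots(d-1)!}\ \ge\ \frac{(n!)^d}{(d!)^n\prod_{i=0}^{d-1}i!}.
\]
This gives
\[
\abs{a_n z^n}\le \prod_{i=0}^{d-1}i!\,\frac{(d!\abs{z})^n}{(n!)^d}\le \prod_{i=0}^{d-1}i!\,\frac{(d!\abs{z})^n}{n!},
\]
which is dominated by a constant times $e^{d!\abs{z}}$. This bound yields uniform convergence on every disc $\abs{z}\le r$ by the Weierstrass M-test. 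Equivalently, one can note via Eq.(\ref{exp_hyper}) that $\exp_d(x)={}_0F_{d-1}(;2,\ldots,d;d!x)$, and a ${}_pF_q$ with $p<q+1$ is entire.

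Finally, a power series with infinite radius of convergence is holomorphic on all of $\C$, which concludes the proof. I do not expect a real obstacle here; the only care needed is the degenerate case $d=0$, which is excluded, and the observation that $\brk[a]{n}_d\ge n$ for $d\ge1$.
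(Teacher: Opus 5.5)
Your proof is correct, but it argues more directly than the paper does. The paper's proof is a one-line remark: since $\exp_d(x)={}_0F_{d-1}(;2,3,\ldots,d;d!x)$ by Eq.~(\ref{exp_hyper}), entireness follows from the known convergence of ${}_0F_{d-1}$ on all of $\C$. You instead apply the ratio test to $\sum_n z^n/\brk[a]{n}_d!$ itself, using $a_n/a_{n+1}=\brk[a]{n+1}_d=\binom{n+d}{d}\ge n+1\to\infty$.

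Your route is self-contained. It needs only the definition of $\brk[a]{n}_d$, not the closed form for $\brk[a]{n}_d!$ or the hypergeometric identification, and it is essentially the computation behind the standard fact the paper cites. The paper's route is shorter but outsources that fact. Your M-test majorant $\prod_{i=0}^{d-1}i!\,(d!|z|)^n/n!$ also gives the explicit bound $|\exp_d(z)|\le\prod_{i=0}^{d-1}i!\,e^{d!|z|}$, which the paper does not state.

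Your exclusion of $d=0$ is a real correction. The paper takes $\N=\{0,1,2,\ldots\}$, and for $d=0$ one has $\brk[a]{n}_0=1$. Then $\exp_0(z)=\sum_n z^n=1/(1-z)$ has radius of convergence $1$, and ${}_0F_{-1}$ is meaningless. So the theorem must be read for $d\ge1$.
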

\begin{proof}
The convergence of $\exp_d(x)$ follows from the convergence of the generalized hypergeometric function ${}_{0}F_{d-1}$.   
\end{proof}
When $d=2$, the initial value problem
\begin{equation}
    \D_{\mathrm{T}}y=y,\hspace{0.5cm}y(0)=1
\end{equation}
is equivalent to the problem
\begin{equation*}
    xy^{\prime\prime}+2y^{\prime}-2y=0,\hspace{0.5cm}y(0)=1
\end{equation*}
the solution to which is
\begin{equation*}
    y(x)=\frac{I_{1}(2\sqrt{2x})}{\sqrt{2x}},
\end{equation*}
where $I_{n}(x)$ is the first order modified Bessel function. Then
\begin{equation}
    \exp_{\mathrm{T}}(x)=\frac{I_{1}(2\sqrt{2x})}{\sqrt{2x}}.
\end{equation}





\begin{proposition}
\begin{equation}\label{eqn_prod_exp}
    \exp_{d}(x)\exp_{d}(y)=\exp_{d}(x\oplus_{d}y)
\end{equation}
\end{proposition}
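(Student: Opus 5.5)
The identity is to be read formally: the right-hand side means
\begin{equation*}
\exp_d(x\oplus_d y)=\sum_{n=0}^{\infty}\frac{(x\oplus_d y)^{(n)}}{\brk[a]{n}_d!},
\end{equation*}
that is, each power $u^n$ in the series for $\exp_d(u)$ is replaced by the bivariate $d$-Hoggatt polynomial $(x\oplus_d y)^{(n)}$. With this reading the plan is a Cauchy product computation, exactly as in the classical proof of $e^xe^y=e^{x+y}$.

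First, multiply the two series $\exp_d(x)=\sum_{j}x^j/\brk[a]{j}_d!$ and $\exp_d(y)=\sum_{k}y^k/\brk[a]{k}_d!$. Both are entire functions of their arguments by the preceding theorem, so they converge absolutely and the double series may be rearranged freely. Grouping the terms with $j+k=n$ gives
\begin{equation*}
\exp_d(x)\exp_d(y)=\sum_{n=0}^{\infty}\sum_{k=0}^{n}\frac{x^{n-k}y^k}{\brk[a]{n-k}_d!\,\brk[a]{k}_d!}.
\end{equation*}

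Next, multiply and divide the inner sum by $\brk[a]{n}_d!$. By the definition Eq.(\ref{simplinomial}) of the $d$-Hoggatt binomial coefficients, the inner sum becomes
\begin{equation*}
\frac{1}{\brk[a]{n}_d!}\sum_{k=0}^{n}\simplinomial{n}{k}_d x^{n-k}y^k=\frac{(x\oplus_d y)^{(n)}}{\brk[a]{n}_d!}.
\end{equation*}
Here the case $n=0$ matches the convention $(x\oplus_d y)^{(0)}=1$. Summing over $n$ then yields $\exp_d(x\oplus_d y)$.

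There is no real obstacle. The only points needing care are these:
\begin{itemize}
\item stating that $\exp_d(x\oplus_d y)$ is defined termwise, since $\oplus_d$ is not an addition of numbers;
\item justifying the rearrangement, which follows from absolute convergence because $\brk[a]{n}_d!$ grows at least like $n!$;
\item checking that the index conventions agree with the definition of $(x\oplus_d y)^{(n)}$, including the $n=0$ case.
\end{itemize}
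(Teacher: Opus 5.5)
Your proposal is correct and follows the paper's own argument: a Cauchy product of the two series, grouping terms of total degree $n$, and recognizing $\frac{1}{\brk[a]{n}_d!}\sum_{k=0}^{n}\simplinomial{n}{k}_d x^{n-k}y^k$ as $(x\oplus_d y)^{(n)}/\brk[a]{n}_d!$. Your extra remarks are points the paper leaves implicit, namely that $\exp_d(x\oplus_d y)$ is defined termwise and that rearranging is legitimate by absolute convergence (via $\brk[a]{n}_d!\ge n!$).
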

\begin{proof}
    \begin{align*}
        \exp_d(x)\exp_d(y)&=\left(\sum_{n=0}^{\infty}\frac{x^n}{\brk[a]{n}_d!}\right)\left(\sum_{n=0}^{\infty}\frac{y^n}{\brk[a]{n}_d!}\right)\\
        &=\sum_{n=0}^{\infty}\left(\sum_{k=0}^{n}\simplinomial{n}{k}_dx^{n-k}y^k\right)\frac{1}{\brk[a]{n}_d!}=\sum_{n=0}^{\infty}\frac{(x\oplus_dy)^{(n)}}{\brk[a]{n}_d!}\\
        &=\exp_d(x\oplus_dy).
    \end{align*}
\end{proof}
Then,
\begin{equation}
    \frac{I_1(2\sqrt{2xt})}{\sqrt{2xt}}\frac{I_1(2\sqrt{2yt})}{\sqrt{2yt}}=\sum_{n=0}^{\infty}(x\oplus_2y)^{(n)}\frac{t^n}{\brk[a]{n}_d!}
\end{equation}
is the generating function of the bivariate Narayana polynomials.

\subsection{Simplicial $d$-polytopic-type hypergeometric functions}

\begin{definition}
For a positive real number $a$, the simplicial $d$-polytopic-type Pochhammer symbol is defined by
\begin{equation}
    (a)_{d,n}=
    \begin{cases}
        1,&\text{ if }n=0;\\
        \brk[a]{a}_d\brk[a]{a+1}_d\brk[a]{a+2}_d\cdots\brk[a]{a+n-1}_d,&\text{ if }n>0.
    \end{cases}
\end{equation}
\end{definition}
The simplicial $d$-polytopic-type Pochhammer symbol is directly related to $d$-simplitorial:
\begin{equation}
    (1)_{d,n}=\brk[a]{n}_d!
\end{equation}

\begin{definition}
For real numbers $a_1,a_2,\ldots,a_r,b_1,b_2,\ldots,b_s$ we define the simplicial $d$-polytopic-type hypergeometric series $_{r}\sigma_{s}^d$ (shortly $\s_d$-hypergeometric series) as
    \begin{align}
        _{r}\sigma^d_{s}(a_1,a_2,\ldots,a_r;b_1,b_2,\ldots,b_s;z)&\equiv{}_{r}\sigma^d_{s}\left(
    \begin{array}{c}
         a_{1},a_{2},\ldots,a_{r} \\
         b_{1},\ldots,b_{s}
    \end{array}
    ;q,z
    \right)\nonumber\\
    &=\sum_{n=0}^{\infty}\frac{(a_{1})_{d,n}(a_{2})_{d,n}\cdots(a_{r})_{d,n}}{(b_{1})_{d,n}(b_{2})_{d,n}\cdots(b_{s})_{d,n}}\frac{z^n}{\brk[a]{n}_d!},
    \end{align}
where it is assumed that $b_{i}\neq0,-1,-2,-3,\ldots$, so that no zero factors appear in the denominators of the terms of the series.
\end{definition}
When $d=1$
\begin{equation}
    _{r}\sigma^1_{s}(a_1,a_2,\ldots,a_r;b_1,b_2,\ldots,b_s;z)={}_{r}F_{s}(a_1,a_2,\ldots,a_r;b_1,b_2,\ldots,b_s;z).
\end{equation}
A special cases is the $\s_d$-exponential function 
\begin{equation}
    \exp_{d}(x)={}_{0}\sigma^d_{0}(-;-;x)={}_{0}F_{d-1}(;2,3,\ldots,d;d!x).
\end{equation}

We write
\begin{equation}
    \mathrm{T}_{d,n}(x)=\sum_{k=0}^{n}\frac{x^k}{\brk[a]{k}_d!}
\end{equation}
for the $n$-th partial sum of the $\s_d$-exponential function, which is a polynomial of degree $n$. Note that
\begin{equation}
    \exp_d(x)-\mathrm{T}_{d,n-1}(x)=\frac{x^n}{\brk[a]{n}_d!}\sum_{k=0}^{\infty}\frac{x^k}{(n+1)_{d,k}}=\frac{x^n}{\brk[a]{n}_{d}!}{}_{1}\sigma^d_{1}(1;n+1;x).
\end{equation}
Then from Eq.(\ref{exp_hyper})
\begin{equation}
    {}_{1}\sigma^d_{1}(1;n+1;x)=\frac{\brk[a]{n}_d!}{x^n}\left({}_{0}F_{d-1}(;2,3,\ldots,d;d!x)-\mathrm{T}_{d,n-1}(x)\right).
\end{equation}
\section{$\s_{d}$-Hypergeometric Bernoulli numbers and polynomials}

\begin{definition}
The simplicial $d$-polytopic-type hypergeometric Bernoulli numbers (shortly $\s_d$-hypergeometric Bernoulli) are defined by the generating function
\begin{equation}\label{hyp_Ber_num}
    \frac{t^m/\brk[a]{m}_d!}{\exp_{d}(t)-\mathrm{T}_{d,m-1}(t)}=\frac{1}{_{1}\sigma^d_{1}(1;m+1;t)}=\sum_{n=0}^{\infty}B_{d,n}(m)\frac{t^n}{\brk[a]{n}_{d}!}.
\end{equation}
\end{definition}

\begin{theorem}
    \begin{equation}
    B_{d,n}(m)=\sum_{i=1}^{n}\sum_{k_{1}+k_{2}+\cdots+k_{i}=n}\frac{(-1)^i\brk[a]{n}_d!}{(m+1)_{d,k_1}(m+1)_{d,k_2}\cdots(m+1)_{d,k_i}}.
\end{equation}
\end{theorem}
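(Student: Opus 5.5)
We work with formal power series in $t$; analytic convergence near $t=0$ is not needed. The plan is to expand the reciprocal in (\ref{hyp_Ber_num}) as a geometric series. Since $(1)_{d,k}=\brk[a]{k}_d!$, the $\brk[a]{k}_d!$ in the denominator of the definition of the $\s_d$-hypergeometric series cancels, giving
\begin{equation*}
{}_{1}\sigma^d_{1}(1;m+1;t)=\sum_{k=0}^{\infty}\frac{(1)_{d,k}}{(m+1)_{d,k}}\frac{t^k}{\brk[a]{k}_d!}=1+u(t),\qquad u(t)=\sum_{k=1}^{\infty}\frac{t^k}{(m+1)_{d,k}}.
\end{equation*}
This matches the representation $\exp_d(t)-\mathrm{T}_{d,m-1}(t)=\frac{t^m}{\brk[a]{m}_d!}\sum_{k\ge0}t^k/(m+1)_{d,k}$ noted just before Section 4. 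The key point is that $u(t)$ has no constant term, so $1+u$ is invertible in $\C[[t]]$ and
\begin{equation*}
\frac{1}{{}_{1}\sigma^d_{1}(1;m+1;t)}=\sum_{i=0}^{\infty}(-1)^i u(t)^i,
\end{equation*}
where only finitely many $i$ contribute to each coefficient, since $u^i=O(t^i)$.

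Next I would extract the coefficient of $t^n$ from $u(t)^i$ by multiplying out the $i$ factors. This coefficient is the sum, over all compositions $k_1+\cdots+k_i=n$ with every $k_j\ge1$, of $\prod_{j=1}^{i}1/(m+1)_{d,k_j}$. It vanishes when $i>n$, and for $n\ge1$ it vanishes when $i=0$. Comparing with $\sum_n B_{d,n}(m)t^n/\brk[a]{n}_d!$ and multiplying by $\brk[a]{n}_d!$ then gives exactly the stated double sum with $i$ running from $1$ to $n$.

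There is no real obstacle; the only care needed is bookkeeping. First, the inner sum must be read as a sum over compositions with positive parts $k_j\ge1$, which I will state explicitly. Second, the case $n=0$ falls outside the formula: there the $i=0$ term gives $B_{d,0}(m)=1$, so the statement should be understood for $n\ge1$, and I would note this separately.
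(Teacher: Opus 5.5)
Your proposal is correct and follows essentially the same route as the paper, which also uses Jordan's method: write $1/{}_{1}\sigma^d_{1}(1;m+1;t)=\sum_{i\ge0}(-1)^i\bigl({}_{1}\sigma^d_{1}(1;m+1;t)-1\bigr)^i$ with ${}_{1}\sigma^d_{1}(1;m+1;t)-1=\sum_{k\ge1}t^k/(m+1)_{d,k}$, then read off the coefficient of $t^n$. Your two explicit remarks are correct refinements that the paper leaves implicit: the parts $k_j$ must be positive, and the formula holds only for $n\ge1$, since $B_{d,0}(m)=1$ while the stated sum is empty.
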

\begin{proof}
Employing a method given by Jordan \cite{jordan}, we can give an explicit expression for $B_{d,n}$. Indeed, we can write 
\begin{align*}
    \frac{1}{{}_{1}\sigma^d_{1}(1;m+1;t)}=\frac{1}{1+\left({}_{1}\sigma^d_{1}(1;m+1;t)-1\right)}&=\sum_{n=0}^{\infty}(-1)^n\left({}_{1}\sigma^d_{1}(1;m+1;t)-1\right)^n\\
    &=1+\sum_{n=1}^{\infty}(-1)^n\left(\sum_{k=1}^{\infty}\frac{t^k}{(m+1)_{d,k}}\right)^n.
\end{align*}
Thus
\begin{equation}
    B_{d,n}(m)=\sum_{i=1}^{n}\sum_{k_{1}+k_{2}+\cdots+k_{i}=n}\frac{(-1)^i\brk[a]{n}_d!}{(m+1)_{d,k_1}(m+1)_{d,k_2}\cdots(m+1)_{d,k_i}}.
\end{equation}
\end{proof}

\begin{definition}
The simplicial $d$-polytopic-type hypergeometric Bernoulli polynomials $B_{d,n}(m;x)$, or $\s_{d}$-hypergeometric Bernoulli, are defined by the generating function
\begin{equation}\label{hyp_Ber_poly}
    \frac{t^m\exp_{d}(xt)/\brk[a]{m}_d!}{\exp_{d}(t)-\mathrm{T}_{d,m-1}(t)}=\frac{\exp_{d}(xt)}{{}_{1}\sigma^d_{1}(1;m+1;t)}=\sum_{n=0}^{\infty}B_{d,n}(m;x)\frac{t^n}{\brk[a]{n}_{d}!}.
\end{equation}
\end{definition}
By comparing Eqs. (\ref{hyp_Ber_num}) and (\ref{hyp_Ber_poly}), we can write $B_{d,n}(m)=B_{d,n}(m;0)$.

\begin{theorem}
The $\s_d$-hypergeometric Bernoulli polynomials have the following representation
    \begin{equation}
        B_{d,n}(m;x)=\sum_{k=0}^{n}\simplinomial{n}{k}_{d}B_{k,d}(m)x^{n-k}.
    \end{equation}
\end{theorem}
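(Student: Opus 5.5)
The plan is to prove the representation by a Cauchy product of generating functions, comparing coefficients of $t^n/\brk[a]{n}_d!$. This is the same mechanism used in the proof of Proposition (Eq.~(\ref{eqn_prod_exp})).

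First, split the left-hand side of the defining relation Eq.~(\ref{hyp_Ber_poly}) into two factors,
\begin{equation*}
\frac{\exp_{d}(xt)}{{}_{1}\sigma^d_{1}(1;m+1;t)}=\left(\sum_{k=0}^{\infty}B_{d,k}(m)\frac{t^k}{\brk[a]{k}_d!}\right)\left(\sum_{j=0}^{\infty}\frac{x^jt^j}{\brk[a]{j}_d!}\right).
\end{equation*}
The first factor is the generating function Eq.~(\ref{hyp_Ber_num}) of the $\s_d$-hypergeometric Bernoulli numbers. The second is the series definition of $\exp_d$ evaluated at $xt$.

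Both series converge near $t=0$, so they may be multiplied as formal power series in $t$. Note that $\exp_d$ is entire, and ${}_{1}\sigma^d_{1}(1;m+1;t)$ equals $1$ at $t=0$, hence is nonzero in a neighbourhood of $0$. Collecting the terms with $k+j=n$ gives
\begin{equation*}
\sum_{n=0}^{\infty}\left(\sum_{k=0}^{n}\frac{B_{d,k}(m)\,x^{n-k}}{\brk[a]{k}_d!\,\brk[a]{n-k}_d!}\right)t^n .
\end{equation*}
Next, multiply and divide the inner coefficient by $\brk[a]{n}_d!$. By the definition Eq.~(\ref{simplinomial}) of the $d$-Hoggatt binomial coefficients, the coefficient of $t^n/\brk[a]{n}_d!$ becomes $\sum_{k=0}^{n}\simplinomial{n}{k}_d B_{d,k}(m)x^{n-k}$.

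Finally, the representation $\sum_n B_{d,n}(m;x)t^n/\brk[a]{n}_d!$ is unique, since $\brk[a]{n}_d!\neq0$ for all $n$. Equating coefficients therefore gives the claimed formula. Here $B_{k,d}(m)$ in the statement is read as $B_{d,k}(m)$.

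There is no genuine obstacle. The only point needing care is justifying the coefficient comparison: both sides are analytic at $t=0$, or one simply works throughout in formal power series. Optionally, the result can be cross-checked with the symmetry $\simplinomial{n}{k}_d=\simplinomial{n}{n-k}_d$ and the case $x=0$, which recovers $B_{d,n}(m)=B_{d,n}(m;0)$.
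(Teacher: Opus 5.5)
Your proposal is correct and follows essentially the same route as the paper: factor the generating function as the Bernoulli-number series times $\exp_d(xt)$, take the Cauchy product, and identify coefficients of $t^n/\brk[a]{n}_d!$ using the definition of $\simplinomial{n}{k}_d$. Your additional remarks on justifying the coefficient comparison and on reading $B_{k,d}(m)$ as $B_{d,k}(m)$ are appropriate.
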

\begin{proof}
From definition of $\s_d$-hypergeometric Bernoulli polynomials Eq.(\ref{hyp_Ber_poly}), we have
    \begin{align*}
        \sum_{n=0}^{\infty}B_{d,n}(m;x)\frac{t^n}{\brk[a]{n}_{d}!}&=\frac{1}{{}_{1}\sigma^d_{1}(1;m+1;t)}\exp_{d}(xt)\\
        &=\left(\sum_{n=0}^{\infty}B_{d,n}(m)\frac{t^n}{\brk[a]{n}_{d}!}\right)\left(\sum_{n=0}^{\infty}x^n\frac{t^n}{\brk[a]{n}_{d}!}\right)\\
        &=\sum_{n=0}^{\infty}\left(\sum_{k=0}^n\simplinomial{n}{k}_dB_{k,d}(m)x^{n-k}\right)\frac{t^n}{\brk[a]{n}_{d}!}.
    \end{align*}
The result is obtained by identifying the coefficients of $t^n$ on both sides of the previous equality.
\end{proof}

\begin{theorem}
For all $n\geq1$,
    \begin{equation}
        \D_{\s_{d}}B_{d,n}(m;x)=\brk[a]{n}_{d}B_{d,n-1}(m;x).
    \end{equation}
\end{theorem}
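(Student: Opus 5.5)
We would apply $\D_{\s_d}$ term by term to the representation of $B_{d,n}(m;x)$ established in the preceding theorem,
\begin{equation*}
B_{d,n}(m;x)=\sum_{k=0}^{n}\simplinomial{n}{k}_{d}B_{d,k}(m)x^{n-k}.
\end{equation*}
Since $B_{d,n}(m;x)$ is a polynomial in $x$, it lies in $C^{d}$. The linearity theorem for $\D_{\s_d}$ lets us move the operator inside the finite sum, treating the numbers $B_{d,k}(m)$ as constants. The power rule $\D_{\s_d}x^{j}=\brk[a]{j}_d x^{j-1}$ then gives
\begin{equation*}
\D_{\s_d}B_{d,n}(m;x)=\sum_{k=0}^{n-1}\simplinomial{n}{k}_{d}\brk[a]{n-k}_d B_{d,k}(m)x^{n-k-1}.
\end{equation*}
The $k=n$ term drops out because $\brk[a]{0}_d=\binom{d-1}{d}=0$.

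Next we simplify the coefficient exactly as in the proof that $\D_{\s_d}(x\oplus_d a)^{(n)}=\brk[a]{n}_d(x\oplus_d a)^{(n-1)}$. Using $\brk[a]{n-k}_d!/\brk[a]{n-k}_d=\brk[a]{n-k-1}_d!$ and $\brk[a]{n}_d!=\brk[a]{n}_d\,\brk[a]{n-1}_d!$, valid for $n\ge1$, we obtain
\begin{equation*}
\simplinomial{n}{k}_d\brk[a]{n-k}_d=\brk[a]{n}_d\simplinomial{n-1}{k}_d, \qquad 0\le k\le n-1.
\end{equation*}
Factoring out $\brk[a]{n}_d$ leaves $\sum_{k=0}^{n-1}\simplinomial{n-1}{k}_d B_{d,k}(m)x^{n-1-k}$, which by the same representation theorem is $B_{d,n-1}(m;x)$.

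As an alternative check, one could apply $\D_{\s_d}$ in $x$ to the generating function. Since $\D_{\s_d}\exp_d(xt)=t\exp_d(xt)$ (term by term via the power rule), comparing coefficients of $t^n/\brk[a]{n}_d!$ gives the same result. This route, however, requires justifying term-by-term differentiation of the power series, so we prefer the finite computation.

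No step is a real obstacle. The only points needing care are the boundary term $k=n$, which vanishes because $\brk[a]{0}_d=0$, and the restriction $n\ge1$, which is needed for the factorial identity and for $B_{d,n-1}$ to make sense.
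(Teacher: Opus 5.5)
Your proof is correct, but it takes a different route from the paper. The paper works with the generating function. It applies $\D_{\s_d}$ (in $x$) to $\exp_d(xt)/{}_{1}\sigma^d_{1}(1;m+1;t)$ and uses $\D_{\s_d}\exp_d(xt)=t\exp_d(xt)$. It then rewrites $t\sum_n B_{d,n}(m;x)t^n/\brk[a]{n}_d!$ as $\sum_n \brk[a]{n}_d B_{d,n-1}(m;x)t^n/\brk[a]{n}_d!$ and compares coefficients. This is exactly the ``alternative check'' you mention; the paper's last line carelessly puts $\brk[a]{n}_d$ outside the sum over $n$, so it must be read termwise.

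You instead stay with the finite Appell-type representation $B_{d,n}(m;x)=\sum_k\simplinomial{n}{k}_d B_{d,k}(m)x^{n-k}$. You reuse the coefficient identity $\simplinomial{n}{k}_d\brk[a]{n-k}_d=\brk[a]{n}_d\simplinomial{n-1}{k}_d$, which is the same computation the paper does for $\D_{\s_d}(x\oplus_d a)^{(n)}$.

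What your route buys: it needs only linearity and the power rule on polynomials. It avoids interchanging $\D_{\s_d}$ with an infinite sum, which is harmless if the identity is read as one of formal power series in $t$ with polynomial coefficients, but the paper does not say so. It also avoids the eigenfunction relation $\D_{\s_d}\exp_d(xt)=t\exp_d(xt)$, which the paper uses without proof and which itself rests on applying the power rule term by term.

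What the paper's route buys: it is shorter, and it shows that the result depends only on the generating function having the form $A(t)\exp_d(xt)$. Your argument is equally general, since the representation theorem holds for any such $A(t)$. It therefore transfers verbatim, for instance to the $\s_d$-Bernoulli polynomials $B_{d,n}(x)$, for which the paper states the same derivative rule without proof.
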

\begin{proof}
From definition of $\s_d$-hypergeometric Bernoulli polynomials Eq.(\ref{hyp_Ber_poly}), we have
    \begin{align*}
        \sum_{n=0}^{\infty}\D_{\s_d}B_{d,n}(m;x)\frac{t^n}{\brk[a]{n}_{d}!}&=\D_{\s_d}\left(\sum_{n=0}^{\infty}B_{d,n}(m;x)\frac{t^n}{\brk[a]{n}_{d}!}\right)\\
        &=\D_{\s_d}\left\{\frac{\exp_{d}(xt)}{{}_{1}\sigma^d_{1}(1;m+1;t)}\right\}\\
        &=\frac{t\exp_{d}(xt)}{{}_{1}\sigma^d_{1}(1;m+1;t)}\\
        &=\brk[a]{n}_{d}\sum_{n=0}^{\infty}B_{d,n-1}(m;x)\frac{t^n}{\brk[a]{n}_{d}!}.
    \end{align*}
The result follows by identifying the coefficients of $t^n$.
\end{proof}

\begin{theorem}
The translation formula holds
    \begin{equation}
        B_{d,n}(m;x\oplus_dy)=\sum_{k=0}^{n}\simplinomial{n}{k}_{d}B_{d,n}(m;x)y^{n-k}.
    \end{equation}
\end{theorem}
\begin{proof}
From the product of $\s_d$-exponential functions Eq.(\ref{eqn_prod_exp}), we have
    \begin{align*}
        \sum_{n=0}^{\infty}B_{d,n}(m;x\oplus_dy)\frac{t^n}{\brk[a]{n}_d!}&=\frac{\exp_{d}(x\oplus_dy)}{{}_{1}\sigma^d_{1}(1;m+1;t)}=\frac{\exp_{d}(xt)\exp_{d}(yt)}{{}_{1}\sigma^d_{1}(1;m+1;t)}\\
        &=\left(\sum_{n=0}^{\infty}B_{d,n}(m;x)\frac{t^n}{\brk[a]{n}_{d}!}\right)\left(\sum_{n=0}^{\infty}y^n\frac{t^n}{\brk[c]{n}_{d}!}\right)\\
        &=\sum_{n=0}^{\infty}\left(\sum_{k=0}^{n}\simplinomial{n}{k}_{d}B_{d,n}(m;x)y^{n-k}\right)\frac{t^n}{\brk[a]{n}_{d}!}.
    \end{align*}
The result is obtained by identifying the coefficients of $t^n$ on both sides of the previous equality.   
\end{proof}

\begin{theorem}
For all $0\leq n\leq m-1$
\begin{equation}\label{eqn_rep_xmas1}
    B_{d,n}(m;x\oplus_d1)=\sum_{k=0}^{n}\simplinomial{n}{k}_dB_{d,n-k}(m;x)
\end{equation}
and for $n\geq m$
\begin{equation}\label{eqn_rep_xmas1_pow}
    B_{d,n}(m;x\oplus_d1)-\sum_{k=0}^{m-1}\simplinomial{n}{k}_dB_{d,n-k}(m;x)=\simplinomial{n}{m}_dx^{n-m}
\end{equation}
\end{theorem}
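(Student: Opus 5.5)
We will argue at the level of generating functions, as in the proof of the translation formula. Multiply the defining relation Eq.(\ref{hyp_Ber_poly}) by $\exp_d(t)$ and use Eq.(\ref{eqn_prod_exp}) with $y=1$, understood as $\exp_d(xt)\exp_d(t)=\exp_d(xt\oplus_d t)$. This gives
\begin{equation*}
\sum_{n=0}^{\infty}B_{d,n}(m;x\oplus_d1)\frac{t^n}{\brk[a]{n}_d!}=\frac{\exp_d(xt)\exp_d(t)}{{}_{1}\sigma^d_{1}(1;m+1;t)}.
\end{equation*}
Here $B_{d,n}(m;x\oplus_d1)$ denotes the polynomial obtained by replacing each power $x^j$ by $(x\oplus_d1)^{(j)}$, which is consistent with the translation formula for $y=1$.

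The key step is to split the numerator factor $\exp_d(t)$ as $\mathrm{T}_{d,m-1}(t)+\bigl(\exp_d(t)-\mathrm{T}_{d,m-1}(t)\bigr)$. By the identity preceding the definition of the Bernoulli numbers,
\begin{equation*}
\exp_d(t)-\mathrm{T}_{d,m-1}(t)=\frac{t^m}{\brk[a]{m}_d!}\,{}_{1}\sigma^d_{1}(1;m+1;t).
\end{equation*}
This cancels the denominator, so the right-hand side becomes
\begin{equation*}
\mathrm{T}_{d,m-1}(t)\,\frac{\exp_d(xt)}{{}_{1}\sigma^d_{1}(1;m+1;t)}+\frac{t^m}{\brk[a]{m}_d!}\exp_d(xt).
\end{equation*}

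For the first term, expand the product of $\mathrm{T}_{d,m-1}(t)=\sum_{k=0}^{m-1}t^k/\brk[a]{k}_d!$ with $\sum_j B_{d,j}(m;x)t^j/\brk[a]{j}_d!$ by the Cauchy product. This gives the coefficient
\begin{equation*}
\sum_{k=0}^{\min(n,m-1)}\simplinomial{n}{k}_d B_{d,n-k}(m;x)
\end{equation*}
of $t^n/\brk[a]{n}_d!$. For the second term, the coefficient of $t^n/\brk[a]{n}_d!$ is $\brk[a]{n}_d!\,x^{n-m}/(\brk[a]{m}_d!\,\brk[a]{n-m}_d!)=\simplinomial{n}{m}_d x^{n-m}$ when $n\ge m$, and $0$ otherwise.

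Comparing coefficients then splits into the two cases. For $n\le m-1$, the truncation $\min(n,m-1)=n$ gives the full sum and there is no extra term, which is Eq.(\ref{eqn_rep_xmas1}). For $n\ge m$, the sum stops at $k=m-1$ and the extra monomial $\simplinomial{n}{m}_dx^{n-m}$ appears, which is Eq.(\ref{eqn_rep_xmas1_pow}).

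The main obstacle I expect is justifying the first step: that $\exp_d(xt)\exp_d(t)$ really is the generating function of $B_{d,n}(m;x\oplus_d1)$ in the intended sense. Eq.(\ref{eqn_prod_exp}) is an identity of formal series in which $\oplus_d$ is not an honest addition. I would handle this by working directly with coefficients: write $\exp_d(xt)\exp_d(t)=\sum_n (x\oplus_d1)^{(n)}t^n/\brk[a]{n}_d!$ and take the product with $1/{}_{1}\sigma^d_{1}$. This yields $\sum_k\simplinomial{n}{k}_dB_{d,k}(m)(x\oplus_d1)^{(n-k)}$, which is exactly the umbral reading of $B_{d,n}(m;x\oplus_d1)$ from the representation theorem $B_{d,n}(m;x)=\sum_k\simplinomial{n}{k}_dB_{d,k}(m)x^{n-k}$. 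Once that interpretation is fixed, everything else is bookkeeping of the Cauchy product.
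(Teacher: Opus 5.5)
Your proposal is correct and follows essentially the same route as the paper. Both write the generating function of $B_{d,n}(m;x\oplus_d1)$ as $\exp_d(xt)\exp_d(t)/{}_{1}\sigma^d_{1}(1;m+1;t)$ and split off $\mathrm{T}_{d,m-1}(t)$ times the generating function of $B_{d,n}(m;x)$. The identity $\exp_d(t)-\mathrm{T}_{d,m-1}(t)=\frac{t^m}{\brk[a]{m}_d!}\,{}_{1}\sigma^d_{1}(1;m+1;t)$ then leaves $\frac{t^m}{\brk[a]{m}_d!}\exp_d(xt)$, and both finish by comparing coefficients of the truncated Cauchy product. Your write-up is slightly more careful than the paper's: you use $\mathrm{T}_{d,m-1}$ throughout, where the paper's display has the typo $\mathrm{T}_{d,m}$, and you spell out the umbral meaning of $x\oplus_d1$, which the paper leaves implicit.
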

\begin{proof}
We have that
    \begin{align*}
        &\sum_{n=0}^{\infty}B_{d,n}(m;x\oplus_d1)\frac{t^n}{\brk[a]{n}_d!}-\mathrm{T}_{d,m-1}(t)\sum_{n=0}^{\infty}B_{d,n}(m;x)\frac{t^n}{\brk[a]{n}_d!}\\
        &\hspace{3cm}=\frac{t^m\exp_{d}(xt)\exp_d(t)/\brk[a]{m}_d!}{\exp_d(t)-\mathrm{T}_{d,m-1}(t)}-\frac{t^m\mathrm{T}_{d,m}(t)\exp_d(xt)/\brk[a]{m}_d!}{\exp_d(t)-\mathrm{T}_{d,m-1}(t)}\\
        &\hspace{3cm}=\frac{t^m}{\brk[a]{m}_d!}\exp_d(xt)=\frac{1}{\brk[a]{m}_d!}\D_{\s_d}^m\exp_d(xt).
    \end{align*}
As
\begin{align*}
    \mathrm{T}_{d,m}(t)\sum_{n=0}^{\infty}B_{d,n}(m;x)\frac{t^n}{\brk[a]{n}_d!}&=\sum_{n=0}^{m-1}\sum_{k=0}^{n}\simplinomial{n}{k}_dB_{d,n-k}(m;x)\frac{t^n}{\brk[a]{n}_d!}\\
    &\hspace{1cm}+\sum_{n=m}^{\infty}\sum_{k=0}^{m-1}\simplinomial{n}{k}_dB_{d,n-k}(m;x)\frac{t^n}{\brk[a]{n}_d!},
\end{align*}
then for $0\leq n\leq m-1$
\begin{equation*}
    B_{d,n}(m;x\oplus_d1)=\sum_{k=0}^{n}\simplinomial{n}{k}_dB_{d,n-k}(m;x)
\end{equation*}
and for $n\geq m$
\begin{equation*}
    B_{d,n}(m;x\oplus_d1)-\sum_{k=0}^{m-1}\simplinomial{n}{k}_dB_{d,n-k}(m;x)=\simplinomial{n}{m}_dx^{n-m}.
\end{equation*}
\end{proof}

\begin{corollary}
For all $0\leq n\leq m-1$
\begin{equation}
    B_{d,n}(m;1)=\sum_{k=0}^{n}\simplinomial{n}{k}_dB_{d,n-k}(m)
\end{equation}
for $n=m$,
\begin{equation}
    B_{d,n}(n;1)=\sum_{k=0}^{n-1}\simplinomial{n}{k}_dB_{d,n-k}(n)+1,
\end{equation}
and for $n>m$
\begin{equation}
    B_{d,n}(m;1)=\sum_{k=0}^{m-1}\simplinomial{n}{k}_dB_{d,n-k}(m).
\end{equation}
\end{corollary}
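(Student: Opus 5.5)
The plan is to specialize the preceding theorem (Eqs. (\ref{eqn_rep_xmas1}) and (\ref{eqn_rep_xmas1_pow})) at $x=0$. Two facts about that substitution are needed first. From the definition of the bivariate $d$-Hoggatt polynomials, $(0\oplus_d1)^{(n)}=\simplinomial{n}{n}_d=1$ for every $n$. Hence $\exp_d(0\oplus_d1)$ is $\exp_d(1)$, and more precisely $B_{d,n}(m;0\oplus_d1)=B_{d,n}(m;1)$. Second, $B_{d,n}(m;0)=B_{d,n}(m)$, which was noted right after Eq.(\ref{hyp_Ber_poly}).

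To make the first fact rigorous, I will use the representation theorem $B_{d,n}(m;x)=\sum_k\simplinomial{n}{k}_dB_{d,k}(m)x^{n-k}$. Evaluating at $x\oplus_d1$ means replacing $x^{n-k}$ by $(x\oplus_d1)^{(n-k)}$, which is the umbral reading the paper uses in the translation formula. At $x=0$ this gives $\sum_k\simplinomial{n}{k}_dB_{d,k}(m)=B_{d,n}(m;1)$.

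Alternatively, and more cleanly, I can work at the generating-function level with the identity from the proof of the preceding theorem, setting $x=0$ there:
\begin{equation*}
\frac{\exp_d(t)}{{}_{1}\sigma^d_{1}(1;m+1;t)}-\mathrm{T}_{d,m-1}(t)\sum_{n\ge0}B_{d,n}(m)\frac{t^n}{\brk[a]{n}_d!}=\frac{t^m}{\brk[a]{m}_d!}.
\end{equation*}
The first term is exactly the generating function of $B_{d,n}(m;1)$ by Eq.(\ref{hyp_Ber_poly}) with $x=1$. So the left side has $t^n$-coefficient (times $\brk[a]{n}_d!$) equal to $B_{d,n}(m;1)-\sum_{k=0}^{\min(n,m-1)}\simplinomial{n}{k}_dB_{d,n-k}(m)$. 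This uses the Cauchy product with $\mathrm{T}_{d,m-1}(t)=\sum_{k<m}t^k/\brk[a]{k}_d!$, as in the proof of the preceding theorem.

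The right side contributes $1$ (that is, $\brk[a]{m}_d!/\brk[a]{m}_d!$) exactly at $n=m$ and $0$ otherwise. This also matches Eq.(\ref{eqn_rep_xmas1_pow}) at $x=0$, where $\simplinomial{n}{m}_dx^{n-m}$ vanishes unless $n=m$, in which case it equals $1$.

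Reading off the three ranges $n\le m-1$, $n=m$ and $n>m$ gives the three formulas. For $n=m$ the sum runs over $k=0,\dots,m-1=n-1$ with parameter $m=n$, giving $B_{d,n}(n;1)=\sum_{k=0}^{n-1}\simplinomial{n}{k}_dB_{d,n-k}(n)+1$.

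The only delicate point is interpreting $B_{d,n}(m;x\oplus_d1)$ at $x=0$ correctly. The generating-function route avoids any ambiguity, so I will present it that way. The rest is bookkeeping on the summation range $\min(n,m-1)$.
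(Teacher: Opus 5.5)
Your proposal is correct and follows the paper's own proof, which simply sets $x=0$ in Eqs. (\ref{eqn_rep_xmas1}) and (\ref{eqn_rep_xmas1_pow}). Doing the specialization in the generating-function identity with $\mathrm{T}_{d,m-1}$ is a sound refinement: since $\exp_d(0)=1$, the first term is literally the generating function of $B_{d,n}(m;1)$, which makes explicit the identification $B_{d,n}(m;0\oplus_d1)=B_{d,n}(m;1)$ that the paper leaves implicit.
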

\begin{proof}
Take $x=0$ in Eqs. (\ref{eqn_rep_xmas1}) and (\ref{eqn_rep_xmas1_pow}).    
\end{proof}

\begin{itemize}
    \item Cases $m=1$. 
    \begin{align*}
        B_{d,0}(1;1)&=B_{d,0}(1),\\
        B_{d,n}(1;1)&=B_{d,n}(1),\hspace{0.5cm}n\geq1.
    \end{align*}
    \item Cases $m=2$. 
    \begin{align*}
        B_{d,0}(2;1)&=B_{d,0}(2),\\
        B_{d,1}(2;1)&=B_{d,1}(2)+B_{d,0}(2),\\
        B_{d,n}(2;1)&=B_{d,n}(2)+\brk[a]{n}_dB_{d,n-1}(2),\hspace{0.5cm}n\geq2.
    \end{align*}
    \item Cases $m=3$. 
    \begin{align*}
        B_{d,0}(3;1)&=B_{d,0}(3),\\
        B_{d,1}(3;1)&=B_{d,1}(3)+B_{d,0}(3),\\
        B_{d,2}(3;1)&=B_{d,2}(3)+\brk[a]{2}_dB_{d,1}(3)+B_{d,0}(3),\\
        B_{d,n}(3;1)&=B_{d,n}(3)+\brk[a]{n}_dB_{d,n-1}(3)+\frac{\brk[a]{n-1}_d\brk[a]{n}_d}{\brk[a]{2}_d}B_{d,n-2}(3),\hspace{0.5cm}n\geq3.
    \end{align*}
\end{itemize}

\begin{theorem}
The inversion formula holds
    \begin{equation}\label{eqn_inver}
        x^n=\sum_{k=0}^{n}\frac{\brk[a]{n}_d!}{(m+1)_{d,k}\brk[a]{n-k}_d}B_{d,n-k}(m;x).
    \end{equation}
\end{theorem}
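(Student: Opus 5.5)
The plan is to invert the generating function \eqref{hyp_Ber_poly} directly: clear the denominator and compare coefficients, in the same way the representation and translation theorems above were obtained. First I would write the Kummer-type factor as a power series in the $\s_d$-exponential normalization. Since $(1)_{d,k}=\brk[a]{k}_d!$, the definition of the $\s_d$-hypergeometric series gives
\begin{equation*}
{}_{1}\sigma^d_{1}(1;m+1;t)=\sum_{k=0}^{\infty}\frac{(1)_{d,k}}{(m+1)_{d,k}}\frac{t^k}{\brk[a]{k}_d!}=\sum_{k=0}^{\infty}\frac{t^k}{(m+1)_{d,k}}.
\end{equation*}
This agrees with the expansion of $\exp_d(t)-\mathrm{T}_{d,m-1}(t)$ displayed just before the section on $\s_d$-hypergeometric Bernoulli numbers.

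Next I would multiply both sides of \eqref{hyp_Ber_poly} by ${}_{1}\sigma^d_{1}(1;m+1;t)$. This gives
\begin{equation*}
\sum_{n=0}^{\infty}x^n\frac{t^n}{\brk[a]{n}_d!}=\exp_d(xt)=\left(\sum_{k=0}^{\infty}\frac{t^k}{(m+1)_{d,k}}\right)\left(\sum_{j=0}^{\infty}B_{d,j}(m;x)\frac{t^j}{\brk[a]{j}_d!}\right).
\end{equation*}
I would then form the Cauchy product on the right. The coefficient of $t^n$ there is
\begin{equation*}
\sum_{k=0}^{n}\frac{B_{d,n-k}(m;x)}{(m+1)_{d,k}\,\brk[a]{n-k}_d!}.
\end{equation*}
Equating it with $x^n/\brk[a]{n}_d!$ and multiplying through by $\brk[a]{n}_d!$ yields the inversion formula. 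All series involved are formal power series in $t$ (and in fact converge near $t=0$, since $\exp_d$ is entire and ${}_{1}\sigma^d_{1}(1;m+1;0)=1\neq0$), so the coefficient identification is legitimate.

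There is no real obstacle; the only delicate point is bookkeeping the normalization. The Cauchy product naturally produces $\brk[a]{n-k}_d!$ in the denominator, not the plain number $\brk[a]{n-k}_d$. So the formula I expect to prove is
\begin{equation*}
x^n=\sum_{k=0}^{n}\frac{\brk[a]{n}_d!}{(m+1)_{d,k}\,\brk[a]{n-k}_d!}B_{d,n-k}(m;x).
\end{equation*}
I would read the $\brk[a]{n-k}_d$ in \eqref{eqn_inver} as this factorial; with the plain number, the $k=n$ term would involve $\brk[a]{0}_d=0$ in a denominator. A sanity check is the case $d=1$: there $(m+1)_{1,k}=(m+1)_k$, and the formula reduces to Howard's classical inversion $x^n=\sum_k\binom{n}{k}\frac{k!}{(m+1)_k}B_{n-k}(m,x)$. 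A second check is $n=0$, where the formula gives $1=B_{d,0}(m;x)$, consistent with the generating function at $t=0$.
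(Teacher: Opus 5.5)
Your proof is correct and follows the paper's argument: multiply the generating function by ${}_{1}\sigma^d_{1}(1;m+1;t)=\sum_{k\ge 0}t^k/(m+1)_{d,k}$, form the Cauchy product with $\sum_{j} B_{d,j}(m;x)\,t^j/\langle j\rangle_d!$, and compare the coefficients of $t^n$. You are also right that the denominator must be the factorial $\langle n-k\rangle_d!$; the paper's statement and the last line of its proof both drop the factorial, and as written the $k=n$ term would divide by $\langle 0\rangle_d=0$.
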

\begin{proof}
From the definition of the $\s_d$-hypergeometric Bernoulli polynomials Eq.(\ref{hyp_Ber_poly}), we have
    \begin{align*}
        \sum_{n=0}^{\infty}x^n\frac{t^n}{\brk[a]{n}_d!}&=\exp_d(xt)={}_{1}\sigma^d_{1}\left(\begin{array}{c}
             1  \\
             m+1 
        \end{array};t\right)\left(\sum_{n=0}^{\infty}B_{d,n}(m;x)\frac{t^n}{\brk[a]{n}_d!}\right)\\
        &=\left(\sum_{n=0}^{\infty}\frac{t^n}{(m+1)_{d,n}}\right)\left(\sum_{n=0}^{\infty}B_{d,n}(m;x)\frac{t^n}{\brk[a]{n}_d!}\right)\\
        &=\sum_{n=0}^{\infty}\left(\sum_{k=0}^{n}\frac{\brk[a]{n}_d!}{(m+1)_{d,k}\brk[a]{n-k}_d}B_{d,n-k}(m;x)\right)\frac{t^n}{\brk[a]{n}_d!}.
    \end{align*}
The result is obtained by identifying the coefficients of $t^n$ on both sides of the previous equality.
\end{proof}

\begin{corollary}
The following equation applies
    \begin{equation}
        \sum_{k=0}^{n}\frac{\brk[a]{n}_d!}{(m+1)_{d,k}\brk[a]{n-k}_d}B_{d,n-k}(m)=
        \begin{cases}
            1,&\text{ if }n=0;\\
            0,&\text{ if }n\neq0.
        \end{cases}
    \end{equation}
\end{corollary}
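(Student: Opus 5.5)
The plan is to specialize the inversion formula Eq.~(\ref{eqn_inver}) at $x=0$. The left-hand side $x^n$ becomes $0^n$, which equals $1$ for $n=0$ and $0$ for $n\geq1$. On the right-hand side, the remark after Eq.~(\ref{hyp_Ber_poly}) gives $B_{d,n-k}(m;0)=B_{d,n-k}(m)$. This produces exactly the stated sum, and the two cases $n=0$ and $n\neq0$ can be read off immediately.

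As an independent check, and to be sure the coefficients are right, I would also argue directly from the generating function Eq.~(\ref{hyp_Ber_num}). By definition,
\[
{}_{1}\sigma^d_{1}(1;m+1;t)\cdot\frac{1}{{}_{1}\sigma^d_{1}(1;m+1;t)}=1 .
\]
Expand the first factor as $\sum_{k\geq0}t^k/(m+1)_{d,k}$, using $(1)_{d,k}=\brk[a]{k}_d!$ so that the numerator cancels the $\brk[a]{k}_d!$ in the series. Expand the second factor as $\sum_{j\geq0}B_{d,j}(m)\,t^j/\brk[a]{j}_d!$. Multiplying the two series as formal power series (Cauchy product) and normalizing by $t^n/\brk[a]{n}_d!$, the coefficient of $t^n/\brk[a]{n}_d!$ is
\[
\sum_{k=0}^{n}\frac{\brk[a]{n}_d!}{(m+1)_{d,k}\,\brk[a]{n-k}_d!}\,B_{d,n-k}(m).
\]
Comparing with the right-hand side $1$ then gives $1$ for $n=0$ and $0$ for $n\geq1$.

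The only delicate point is bookkeeping in the denominator. The Cauchy product naturally gives $\brk[a]{n-k}_d!$, whereas the statement, inherited from Eq.~(\ref{eqn_inver}), displays $\brk[a]{n-k}_d$. I would read the latter as shorthand for the factorial, which is what the product of the two series actually yields. With that reading, both routes agree, and no convergence issue arises because all identities are identities of formal power series in $t$.
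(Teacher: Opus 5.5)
Your proposal is correct and matches the paper's proof, which simply sets $x=0$ in the inversion formula Eq.~(\ref{eqn_inver}) and uses $B_{d,n}(m;0)=B_{d,n}(m)$. Your bookkeeping remark is also right. The Cauchy product gives $\brk[a]{n-k}_d!$ in the denominator, and this factorial reading is in fact forced: taken literally with $\brk[a]{n-k}_d$, the $k=n$ term would divide by $\brk[a]{0}_d=0$.
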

\begin{proof}
Take $x=0$ in Eq.(\ref{eqn_inver}).    
\end{proof}

\section{$\s_d$-Bernoulli numbers and polynomials}

\begin{definition}
The $\s_d$-Bernoulli numbers are defined by
\begin{equation}\label{Ber_num}
    \frac{t}{\exp_{d}(t)-1}=\sum_{n=0}^{\infty}B_{d,n}\frac{t^n}{\brk[a]{n}_{d}!}.
\end{equation}
\end{definition}
The generating function of the Triangular-Bernoulli $B_{\mathrm{T},n}$ is
\begin{equation}
    \frac{t\sqrt{2t}}{I_{1}(2\sqrt{2t})-\sqrt{2t}}=\sum_{n=0}^{\infty}B_{\mathrm{T},n}\frac{(2t)^n}{n!(n+1)!}.
\end{equation}

\begin{theorem}
    \begin{equation}
        B_{d,n}=\sum_{i=1}^{n}\sum_{k_{1}+k_{2}+\cdots+k_{i}=n}\frac{(-1)^i\brk[a]{n}_d!}{\brk[a]{k_1+1}_d!\brk[a]{k_2+1}_d!\cdots\brk[a]{k_i+1}_d!}.
    \end{equation}
\end{theorem}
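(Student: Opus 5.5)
Following the Jordan-type argument already used for the explicit formula of $B_{d,n}(m)$, we expand the reciprocal of the denominator as a geometric series in formal power series. Since $\exp_d(t)-1=\sum_{k\ge1}t^k/\brk[a]{k}_d!$, factoring out $t$ gives
\begin{equation*}
\frac{\exp_d(t)-1}{t}=\sum_{k=0}^{\infty}\frac{t^k}{\brk[a]{k+1}_d!}=1+\sum_{k=1}^{\infty}\frac{t^k}{\brk[a]{k+1}_d!},
\end{equation*}
because $\brk[a]{1}_d!=\brk[a]{1}_d=1$. We therefore write
\begin{equation*}
\frac{t}{\exp_d(t)-1}=\frac{1}{1+\Bigl(\sum_{k\ge1}\frac{t^k}{\brk[a]{k+1}_d!}\Bigr)}=1+\sum_{i=1}^{\infty}(-1)^i\Bigl(\sum_{k=1}^{\infty}\frac{t^k}{\brk[a]{k+1}_d!}\Bigr)^i .
\end{equation*}
This is legitimate as an identity of formal power series: the inner series has zero constant term, so the $i$-th power has order at least $t^i$. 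Hence only finitely many $i$ contribute to each coefficient, and the series converges analytically for small $|t|$.

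Next, expand the $i$-th power by the multinomial (Cauchy product) rule. The coefficient of $t^n$ in $\bigl(\sum_{k\ge1}t^k/\brk[a]{k+1}_d!\bigr)^i$ is
\begin{equation*}
\sum_{\substack{k_1+\cdots+k_i=n\\ k_j\ge1}}\frac{1}{\brk[a]{k_1+1}_d!\cdots\brk[a]{k_i+1}_d!}.
\end{equation*}
This sum is empty for $i>n$, so for $n\ge1$ only $1\le i\le n$ contributes. Comparing with the left side of Eq.~(\ref{Ber_num}), where the coefficient of $t^n$ is $B_{d,n}/\brk[a]{n}_d!$, and multiplying by $\brk[a]{n}_d!$ yields the stated formula for $n\ge1$.

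The case $n=0$ gives $B_{d,0}=1$ from the constant term. This matches the formula only under the convention that the empty sum with $i=0$ contributes $1$, so we will state $n\ge1$ explicitly, or note this convention.

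The only real care point is the reindexing: the compositions must satisfy $k_j\ge1$, which the statement leaves implicit. It comes from the fact that the $k=0$ term $1/\brk[a]{1}_d!=1$ was split off before expanding. Once this is noted, there is no genuine obstacle; the proof is a direct coefficient extraction parallel to the earlier theorem for $B_{d,n}(m)$. Indeed, it is the case $m=1$ of that theorem, since $(2)_{d,k}=\brk[a]{k+1}_d!$ and $\brk[a]{1}_d!=1$, and we may simply remark this as an alternative proof.
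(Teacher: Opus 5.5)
Your proof is correct and is essentially the paper's approach: the paper states this theorem without a proof of its own, but your geometric-series (Jordan) expansion is exactly the argument it gives for $B_{d,n}(m)$. As you note, the statement is the case $m=1$ of that result, since $(2)_{d,k}=\brk[a]{k+1}_d!$. Your caveats that the compositions require $k_j\ge1$ and that the formula only holds for $n\ge1$ (at $n=0$ it gives $0$ rather than $B_{d,0}=1$) are correct refinements of the statement as printed.
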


The following are the first few values of $B_{d,n}$:
\begin{align*}
    B_{d,0}&=1,\hspace{0.2cm}B_{d,1}=-\frac{1}{\brk[a]{2}_d},\hspace{0.2cm}B_{d,2}=\frac{\brk[a]{3}_d-\brk[a]{2}_d}{\brk[a]{2}_d\brk[a]{3}_d},\\
    B_{d,3}&=-\frac{\brk[a]{3}_d\brk[a]{4}_d-2\brk[a]{2}_d\brk[a]{4}+\brk[a]{2}^{2}_d}{\brk[a]{2}^2_d\brk[a]{4}_d},\\
    B_{d,4}&=-\frac{1}{\brk[a]{5}_d}+\frac{\brk[a]{4}_d}{\brk[a]{2}_d\brk[a]{3}_d}+\frac{2}{\brk[a]{2}_d}-\frac{3\brk[a]{4}_d}{\brk[a]{2}_d^2}+\frac{\brk[a]{4}_d!}{\brk[a]{2}_d^4}.
\end{align*}
Some specialization of $\s_d$-Bernoulli numbers are: 
\begin{itemize}
    \item The Triangular-Bernoulli numbers:
    \begin{align*}
        B_{\mathrm{T},0}&=1,\hspace{0.2cm}B_{\mathrm{T},1}=-\frac{1}{3},\hspace{0.2cm}B_{\mathrm{T},2}=\frac{1}{2},\hspace{0.2cm}B_{\mathrm{T},3}=-\frac{1}{10},\hspace{0.2cm}B_{\mathrm{T},4}=\frac{2}{45}.
    \end{align*}
    \item The Tetrahedral-Bernoulli numbers:
    \begin{align*}
        B_{\mathrm{Te},0}&=1,\hspace{0.2cm}B_{\mathrm{Te},1}=-\frac{1}{4},\hspace{0.2cm}B_{\mathrm{Te},2}=\frac{3}{20},\hspace{0.2cm}B_{\mathrm{Te},3}=-\frac{7}{40},\hspace{0.2cm}B_{\mathrm{Te},4}=\frac{97}{280}.
    \end{align*}
    \item The Pentachoron-Bernoulli numbers:
    \begin{align*}
        B_{\mathrm{P},0}&=1,\hspace{0.2cm}B_{\mathrm{P},1}=-\frac{1}{5},\hspace{0.2cm}B_{\mathrm{P},2}=\frac{2}{15},\hspace{0.2cm}B_{\mathrm{P},3}=-\frac{8}{35},\hspace{0.2cm}B_{\mathrm{P},4}=\frac{2237}{210}.
    \end{align*}
    \item The Hexateron-Bernoulli numbers:
    \begin{align*}
        B_{\mathrm{H},0}&=1,\hspace{0.2cm}B_{\mathrm{H},1}=-\frac{1}{6},\hspace{0.2cm}B_{\mathrm{H},2}=\frac{5}{42},\hspace{0.2cm}B_{\mathrm{H},3}=-\frac{15}{56},\hspace{0.2cm}B_{\mathrm{H},4}=\frac{1755}{1334}.
    \end{align*}
\end{itemize}

\begin{definition}
The Simplicial-Polytopic-type Bernoulli polynomials $B_{d,n}(x)$, or $\s_{d}$-Bernoulli, are defined by the generating function
\begin{equation}\label{Ber_poly}
    \frac{t\exp_{d}(xt)}{\exp_{d}(t)-1}=\sum_{n=0}^{\infty}B_{d,n}(x)\frac{t^n}{\brk[a]{n}_{d}!}.
\end{equation}
\end{definition}
The generating function of the Triangular-Bernoulli polynomials is
\begin{equation}
    \frac{tI_{1}(2\sqrt{2t})}{I_{1}(2\sqrt{2t})-\sqrt{2t}}\frac{I_1(2\sqrt{2tx})}{\sqrt{2tx}}=\sum_{n=0}^{\infty}B_{\mathrm{T},n}(x)\frac{(2t)^n}{n!(n+1)!}
\end{equation}
The $\s_d$-Bernoulli polynomials have the representation
    \begin{equation}
        B_{d,n}(x)=\sum_{k=0}^{n}\simplinomial{n}{k}_{d}B_{k,d}x^{n-k}.
    \end{equation}
The first few $\s_d$-Bernoulli polynomials are:
\begin{align*}
    B_{0,d}(x)&=1,\hspace{0.3cm}B_{1,d}(x)=x-\frac{1}{\brk[a]{2}_d},\hspace{0.3cm}B_{2,d}(x)=x^2-x+\frac{\brk[a]{3}_d-\brk[a]{2}_d}{\brk[a]{2}_d\brk[a]{3}_d},\\
    B_{3,d}(x)&=x^3-\frac{\brk[a]{3}_d}{\brk[a]{2}_d}x^2+\frac{\brk[a]{3}_d\brk[a]{2}_d}{\brk[a]{2}_d}x-\frac{\brk[a]{3}_d\brk[a]{4}_d-2\brk[a]{2}_d\brk[a]{4}+\brk[a]{2}^{2}_d}{\brk[a]{2}^2_d\brk[a]{4}_d},\\
    B_{4,d}(x)&=x^4-\frac{\brk[a]{4}_d}{\brk[a]{2}_d}x^3+\frac{\brk[a]{4}_d(\brk[a]{3}_d-\brk[a]{2}_d)}{\brk[a]{2}_d^2}x^2\\
    &\hspace{1cm}-\frac{\brk[a]{3}_d\brk[a]{4}_d-2\brk[a]{2}_d\brk[a]{4}+\brk[a]{2}^{2}_d}{\brk[a]{2}^2_d\brk[a]{4}_d}x+\\
    &\hspace{2cm}\left(-\frac{1}{\brk[a]{5}_d}+\frac{\brk[a]{4}_d}{\brk[a]{2}_d\brk[a]{3}_d}+\frac{2}{\brk[a]{2}_d}-\frac{3\brk[a]{4}_d}{\brk[a]{2}_d^2}+\frac{\brk[a]{4}_d!}{\brk[a]{2}_d^4}\right).
\end{align*}
For all $n\geq1$,
    \begin{equation}
        \D_{\s_{d}}B_{d,n}(x)=\brk[a]{n}_{d}B_{n-1,d}(x).
    \end{equation}
The translation formula holds
    \begin{equation}
        B_{d,n}(x\oplus_dy)=\sum_{k=0}^{n}\simplinomial{n}{k}_{d}B_{d,n}(x)y^{n-k}.
    \end{equation}
For all $n\geq1$
    \begin{equation}\label{eqn_diff}
        B_{d,n}(x\oplus_d1)-B_{d,n}(x)=\brk[a]{n}_dx^{n-1}.
    \end{equation}
If $x=0$ in Eq.(\ref{eqn_diff}), then $B_{d,n}(1)=B_{d,n}(0)=B_{d,n}$. The inversion formula holds
    \begin{equation}
        x^n=\sum_{k=0}^n\simplinomial{n}{k}_d\frac{B_{n-k,d}(x)}{\brk[a]{k+1}}.
    \end{equation}

\end{document}